\newtheorem{theorem}[equation]{Theorem}
\newtheorem{lemma}[equation]{Lemma}
\newtheorem{proposition}[equation]{Proposition}
\newtheorem{corollary}[equation]{Corollary}
\theoremstyle{definition}
\newtheorem*{acknowledgements}{Acknowledgements}
\newtheorem{example}[equation]{Example}
\newtheorem{remark}[equation]{Remark}
\theoremstyle{remark}
\numberwithin{equation}{section}
\newcommand{\FF}{\mathbb{F}}
\newcommand{\ZZ}{\mathbb{Z}}
\newcommand{\NN}{\mathbb{N}}
\newcommand{\TT}{\mathbb{T}}
\newcommand{\GG}{\mathbb{G}}
\newcommand{\LL}{\mathbb{L}}
\newcommand{\CC}{\mathbb{C}}
\newcommand{\bA}{\mathbf{A}}
\newcommand{\bff}{\mathbf{f}}
\newcommand{\bS}{\mathbf{S}}
\newcommand{\cB}{\mathcal{B}}
\newcommand{\cL}{\mathcal{L}}
\newcommand{\cP}{\mathcal{P}}
\newcommand{\rP}{\mathrm{P}}
\newcommand{\La}{\Lambda}
\DeclareMathOperator{\GL}{GL}
\DeclareMathOperator{\Mat}{Mat}
\DeclareMathOperator{\Res}{Res}
\DeclareMathOperator{\Sol}{Sol}
\newcommand{\oK}{\overline{K}}
\newcommand{\tpi}{\widetilde{\pi}}
\newcommand{\Ga}{\GG_{\mathrm{a}}}
\newcommand{\power}[2]{{#1 [[ #2 ]]}}
\newcommand{\laurent}[2]{{#1 (( #2 ))}}
\begin{document}

\title[Identities for Anderson generating functions]{Identities for
Anderson generating functions \\ for Drinfeld modules}

\author{Ahmad El-Guindy}
\address{Current address: Science Program, Texas A{\&}M University in Qatar,
Doha, Qatar}
\address{Permanent address: Department of Mathematics, Faculty of Science,
Cairo University, Giza, Egypt 12613}
\email{a.elguindy@gmail.com}

\author{Matthew A. Papanikolas}
\address{Department of Mathematics, Texas A{\&}M University, College Station,
TX 77843, USA}
\email{map@math.tamu.edu}

\thanks{Research of the second author was partially supported by NSF Grants DMS-0903838 and
DMS-1200577}

\subjclass[2010]{Primary 11G09; Secondary 11B37, 12H10, 33E50}

\date{16 July 2013}

\begin{abstract}
Anderson generating functions are generating series for division values of points on Drinfeld modules, and they serve as important tools for capturing periods, quasi-periods, and logarithms.  They have been fundamental in recent work on special values of positive characteristic $L$-series and in transcendence and algebraic independence problems.  In the present paper we investigate techniques for expressing Anderson generating functions in terms of the defining polynomial of the Drinfeld module and determine new formulas for periods and quasi-periods.
\end{abstract}

\keywords{Drinfeld modules, Anderson generating functions, shadowed
  partitions, periods, quasi-periods, Drinfeld logarithms}

\maketitle

\section{Introduction}

The definition of Anderson generating functions dates back to Anderson's original paper on $t$-motives \cite{Anderson86}, where he used these generating series to prove fundamental results on the uniformization of abelian $t$-modules.  He expanded on them in \cite{Anderson93}, where he used them to provide links between $v$-adic and de Rham realizations of $t$-modules and $t$-motives.

These functions are also central in many other developments in function field arithmetic.  After fixing a $(q-1)$-st root of $-\theta$, the Anderson generating function,
\begin{equation}
  \omega_C(t) = \sum_{m=0}^\infty \exp_C \biggl( \frac{\tpi}{\theta^{m+1}} \biggr) t^m
  = (-\theta)^{1/(q-1)} \prod_{i = 0}^\infty \biggl( 1- \frac{t}{\theta^{q^i}} \biggr)^{-1},
\end{equation}
for the Carlitz module, defined by Anderson and Thakur~\cite{AndThak90}, allows for explicit descriptions of the Carlitz period $\tpi$ and polylogarithms on tensor powers of the Carlitz module.  Indeed, we can calculate $\tpi$ in terms of the residue of $\omega_C(t)$ at $t=\theta$,
\begin{equation} \label{CarlitzPerFormula}
  \tpi = -\Res_{t=\theta} \omega_C(t) = \theta(-\theta)^{1/(q-1)} \prod_{i=1}^\infty
  \Bigl( 1 - \theta^{1-q^i} \Bigr)^{-1},
\end{equation}
which essentially dates back to Carlitz \cite[Thm.~5.1]{Carlitz35}.  More recently $\omega_C(t)$ has proved to be a crucial element in the understanding of vector valued Drinfeld modular forms and also the evaluation of positive characteristic $L$-series of Pellarin; see Pellarin~\cite{Pellarin11}, \cite{Pellarin12}, \cite{Pellarin13}, and Perkins~\cite{Perkins12}.

Anderson generating functions have also been central to transcendence theory, as they are solutions of certain Frobenius difference equations (see also Theorem~\ref{MainThm} in the present paper), in particular to algebraic independence results on periods, quasi-periods, and logarithms.  See for example, work of Anderson, Brownawell, and the second author~\cite{ABP04} and of the second author~\cite{P08}, for connections with special $\Gamma$-values and Carlitz logarithms.  As observed in papers of Gekeler~\cite{Gekeler89a}, \cite{Gekeler89b}, using the theory of biderivations of Yu~\cite{Yu90}, Anderson generating functions for general Drinfeld modules can also be used to evaluate periods and quasi-periods, and this point of view has been useful in transcendence theory for higher rank Drinfeld modules; see work of Chang and the second author \cite{CP11}, \cite{CP12}, which has its beginnings in work of Pellarin~\cite{Pellarin08}.

The goal of the present paper, culminating in our main results in Theorem~\ref{MainThm}, is to find explicit expressions for Anderson generating functions using only the defining polynomial of the Drinfeld module.  If
\[
  \phi_t = \theta + A_1 \tau + \cdots + A_r\tau^r
\]
defines a Drinfeld module $\phi$ over $\CC_\infty$, then for fixed $\xi \in \CC_\infty$ sufficiently small (see Theorem~\ref{MainThm} for a precise bound), we define an explicit series in terms of the parameters $A_1, \dots, A_r$,
\begin{equation}
    \cL_\phi(\xi;t)
  = \xi + \sum_{n=1}^\infty \left( \sum_{\bS \in P_r(n)}\prod_{i=1}^r \prod_{j \in S_i}\frac{A_i^{q^j}}{t-\theta^{q^{i+j}}} \right) \xi^{q^n},
\end{equation}
where the interior sum is finite and taken over certain partitions $(S_1, \dots, S_r)$ of the set $\{0, 1, \dots, n-1\}$ called shadowed partitions (see \S\ref{SP} or \cite[\S 2]{EP}).
The series defining $\cL_\phi(\xi;t)$ converges in the Tate algebra $\TT$ of functions on the closed unit disk of $\CC_\infty$ and is regular at $t=\theta$.

The series $\cL_\phi(\xi;t)$ provides a deformation in the $t$ parameter of the Drinfeld logarithm $\log_\phi(\xi)$ in that formulas of the authors from \cite[\S 3]{EP} imply
\begin{equation}
  \cL_\phi(\xi;\theta) = \log_\phi(\xi).
\end{equation}
As such it generalizes functions of this type for the Carlitz module, which were studied by the second author in \cite[\S 6]{P08}.
It is also directly related to Anderson generating functions:
\begin{equation}
  \cL_\phi(\xi;t) = -(t-\theta) f_\phi(\log_\phi(\xi);t),
\end{equation}
where $f_\phi(\log_\phi(\xi);t)$ is the Anderson generating function associated to $\log_\phi(\xi)$.  Using techniques from \cite{CP11}, \cite{CP12}, \cite{Pellarin08}, we can also express periods and quasi-periods of $\phi$ in terms of values of $\cL_\phi(\xi;t)$ and its Frobenius twists (see \S\ref{ApplQP}).  For more details, Theorem~\ref{MainThm} determines the fundamental properties of $\cL_\phi(\xi;t)$.

The outline of the paper is as follows.  After reviewing definitions about Drinfeld modules and Frobenius difference equations in \S\ref{prelim}, we define and discuss Anderson generating functions in \S\ref{AGF}.  We then investigate the case of the Carlitz module in \S\ref{CarlitzSec}, which centers on the power series $\omega_C(t)$.  In \S\ref{SP} we review facts about shadowed partitions and their applications to Drinfeld exponentials and logarithms, and then in \S\ref{ExplAGF} we prove Theorem~\ref{MainThm} about $\cL_\phi(\xi;t)$ and its connections with Anderson generating functions.  Finally, in \S\ref{ApplQP} we derive new formulas for quasi-periods using Theorem~\ref{MainThm} and discuss their implications to Anderson's Legendre relation in rank~$2$.

\begin{acknowledgements}
We thank the referee for carefully reading the paper and for several helpful suggestions.
\end{acknowledgements}

\section{Function fields and Drinfeld modules} \label{prelim}

Let $q$ be a fixed power of a prime $p$, and let $K = \FF_q(\theta)$
be the rational function field in the variable $\theta$ over the
finite field with $q$ elements.  Let $K_\infty =
\laurent{\FF_q}{1/\theta}$ be the completion of $K$ at $\infty$, with absolute value $\lvert\, \cdot\, \rvert$ chosen so that $|\theta| = q$.  We let $v_\infty$ be the valuation at $\infty$ with $v_\infty(\theta)=-1$, and let $\deg := -v_\infty$.
Let $\CC_\infty$ be the completion of an algebraic closure of
$K_\infty$.

Let $\tau : \CC_\infty \to \CC_\infty$ denote the $q$-th power Frobenius map, and take $\CC_\infty[\tau]$ to be the subalgebra of $\FF_q$-linear endomorphisms of the additive group of $\CC_\infty$.  In this way $\CC_\infty[\tau]$ forms a ring of twisted polynomials in $\tau$, subject to the relation $\tau c = c^q \tau$ for $c \in \CC_\infty$.  Then $\CC_\infty$ is a left $\CC_\infty[\tau]$-module, where if $\psi = a_0 + a_1 \tau + \dots + a_m\tau^m$, then
\[
  \psi(x) = a_0 x + a_1 x^q + \dots + a_m x^{q^m}, \quad x \in \CC_\infty.
\]

Now let $\FF_q[t]$ be the polynomial ring in the variable $t$ independent from $\theta$.  A Drinfeld module of rank $r$ over $\CC_\infty$ is an $\FF_q$-algebra homomorphism
\[
  \phi : \FF_q[t] \to \CC_\infty[\tau],
\]
defined so that
\begin{equation} \label{phidef}
  \phi_t = \theta + A_1 \tau + \dots + A_r \tau^r, \quad A_r \neq 0.
\end{equation}
We will assume throughout that $r \geq 1$.
The map $\phi$ induces an $\FF_q[t]$-module action on~$\CC_\infty$,
\[
  a \cdot x = \phi_a(x), \quad x \in \CC_\infty.
\]
The exponential of $\phi$ is defined to be the $\FF_q$-linear power series in $z$,
\begin{equation} \label{expphidef}
  \exp_\phi(z) = \sum_{n=0}^\infty \alpha_n z^{q^n}, \quad \alpha_0 = 1,\ \alpha_n \in \CC_\infty,
\end{equation}
that satisfies $\exp_\phi( a(\theta) z) = \phi_a(\exp_\phi(z))$, $a \in \FF_q[t]$.
As a function $\exp_\phi : \CC_\infty \to \CC_\infty$, the exponential is entire, $\FF_q$-linear, and surjective.  The logarithm of $\phi$ is defined to be the formal inverse of $\exp_\phi(z)$,
\begin{equation} \label{logphidef}
  \log_\phi(z) = \sum_{n=0}^\infty \beta_n z^{q^n}, \quad \beta_0 = 1,\ \beta_n \in \CC_\infty,
\end{equation}
which has a finite radius of convergence (equal to the absolute value of the smallest period of $\phi$~\cite[Prop.~4.14.2]{GossBook}; or e.g., see \cite[Cor.~4.2]{EP}, where the radius of convergence is worked out explicitly in rank~$2$).  For more details on Drinfeld modules the reader is directed to \cite{GossBook}, \cite{ThakurBook}.

Define the Tate algebra
\[
  \TT = \left\{ \sum_{i=0}^\infty c_i t^i \in \power{\CC_\infty}{t} :
\lvert c_i \rvert \to 0 \right\},
\]
to be the ring of power series that converge on the closed unit disk of
$\CC_\infty$.  We define a norm $\lVert \, \cdot\, \rVert$ on $\TT$ by setting
for $f = \sum c_i t^i$,
\[
  \lVert f \rVert = \sup_i |c_i| = \max_i |c_i|,
\]
and in this way $\lVert\, \cdot\,\rVert$ is a complete non-archimedean
absolute value on $\TT$ satisfying
\[
  \lVert cf \rVert = |c|\cdot  \lVert f \rVert, \quad \lVert fg \rVert
= \lVert f \rVert \cdot \lVert g \rVert, \quad c \in \CC_\infty,\ f, g \in \TT.
\]
We further let $\LL$ be the fraction field of $\TT$.  See \cite[\S 2.2]{FresnelvdPut} or \cite[\S 2.2]{P08} for more details.

For $\ell \in \ZZ$ define the $\ell$-th Frobenius twist automorphism on the
Laurent series field $\laurent{\CC_\infty}{t}$ by
\[
  f \mapsto f^{(\ell)} : \sum c_i t^i \mapsto \sum c_i^{q^\ell} t^i.
\]
For a matrix $M \in \Mat_{r \times s}\bigl( \laurent{\CC_\infty}{t} \bigr)$, we take $M^{(\ell)}$ to be the matrix obtained by taking the $\ell$-th twist of the entries of $M$.
The rings $\CC_\infty[t]$, $\CC_\infty(t)$, $\power{\CC_\infty}{t}$, $\TT$, and $\LL$ are invariant under Frobenius twisting.

We define the ring $\CC_\infty(t)[\tau]$ to be the non-commutative ring of polynomials in $\tau$ with coefficients in $\CC_\infty(t)$, subject to the conditions
\[
  \tau g = g^{(1)} \tau, \quad g \in \CC_\infty(t),
\]
and in this way $\laurent{\CC_\infty}{t}$ forms a left $\CC_\infty(t)[\tau]$-module, where if
\[
  \Delta = g_m\tau^m + \cdots + g_1 \tau + g_0, \quad g_i \in \CC_\infty(t),
\]
then
\[
  \Delta(f) = g_m f^{(m)} + \dots + g_1 f^{(1)} +  g_0 f, \quad f \in \laurent{\CC_\infty}{t}.
\]
For $\Delta \in \CC_\infty(t)[\tau]$, set
\[
  \Sol(\Delta) = \{ f \in \LL : \Delta(f) = 0 \}.
\]
We know that $\Sol(\Delta)$ forms an $\FF_q(t)$-vector space of dimension at most $m = \deg_\tau \Delta$ by the following.  If we let $\Upsilon \in \GL_m\bigl(\CC_\infty(t)\bigr)$ and set
\[
  \Sol(\Upsilon) = \{ \bff \in \Mat_{m\times 1}(\LL) : \bff^{(1)} = \Upsilon \bff \},
\]
then $\Sol(\Upsilon)$ is an $\FF_q(t)$-vector space and
\begin{equation} \label{SolPhiDim}
   \dim_{\FF_q(t)} \Sol(\Upsilon) \leq m
\end{equation}
(see \cite[\S 4.1]{P08}, with $(F,K,L) = (\FF_q(t), \CC_\infty(t),\LL)$, or \cite[Ch.~1]{vdPutSinger}).  Now given $\Delta \in \CC_\infty(t)[\tau]$ as above, form the companion matrix
\[
  \Upsilon_\Delta = \begin{pmatrix}
  0 & 1 & \cdots & 0 \\
  & 0 & \ddots & \vdots \\
  & & \ddots & 1 \\
  -\frac{g_0}{g_m} & -\frac{g_1}{g_m} & \cdots & -\frac{g_{m-1}}{g_m}
  \end{pmatrix}.
\]
Then the map
\[
  f \mapsto \begin{pmatrix} f \\ f^{(1)} \\ \vdots \\ f^{(m-1)} \end{pmatrix}
  : \Sol(\Delta) \to \Sol(\Upsilon_\Delta),
\]
is injective and $\FF_q(t)$-linear.  Thus $\dim_{\FF_q(t)} \Sol(\Delta) \leq \dim_{\FF_q(t)} \Sol(\Upsilon_\Delta) \leq m$.

\begin{example} \label{Ex:omega}
Let $\Delta = \tau - (t-\theta)$.  After fixing a $(q-1)$-st root of $-\theta$, consider
\begin{equation} \label{omegaprod}
  \omega_C(t) = (-\theta)^{1/(q-1)} \prod_{i=0}^\infty \left( 1 - \frac{t}{\theta^{q^i}} \right)^{-1},
\end{equation}
which converges in $\TT$, since $\lVert t/\theta^{q^i} \rVert \to 0$ as $i \to \infty$.  It is also important that $\omega_C(t) \in \TT^{\times}$,
as it is non-vanishing on the closed unit disk in $\CC_\infty$, and moreover $1/\omega_C(t)$ converges on all of $\CC_\infty$.  The function
\[
  \Omega_C(t) = 1/\omega_C(t)^{(1)}
\]
was studied extensively in \cite[\S 5]{ABP04} and \cite[\S 3.3]{P08}.  A short calculation yields
\begin{equation} \label{omegaCdiffeq}
  \Delta \bigl( \omega_C(t) \bigr) = \omega_C(t)^{(1)} - (t-\theta)\omega_C(t) = 0.
\end{equation}
Since $\deg_\tau \Delta = 1$, it follows that $\Sol(\Delta) = \FF_q(t) \cdot \omega_C(t)$.
\end{example}

\section{Anderson generating functions} \label{AGF}

We continue with the notation of \S\ref{prelim}, and assume that a Drinfeld module $\phi : \FF_q[t] \to \CC_\infty[\tau]$ has been chosen as in \eqref{phidef}.  For $u \in \CC_\infty$, we define the Anderson generating function,
\begin{equation} \label{AGFdef}
  f_\phi(u; t) = \sum_{m=0}^\infty \exp_\phi \biggl( \frac{u}{\theta^{m+1}} \biggr) t^m \in \power{\CC_\infty}{t}.
\end{equation}
We have the following proposition due to Pellarin, where we recall from \eqref{expphidef} that $\exp_\phi(z) = \sum \alpha_i z^{q^i}$ with $\alpha_0=1$.

\begin{proposition}[{Pellarin~\cite[\S 4.2]{Pellarin08}}] \label{AGFproperties}
For $u \in \CC_\infty$, we have an identity in $\TT$,
\[
  f_\phi(u; t) = \sum_{n = 0}^{\infty} \frac{ \alpha_n u^{q^n}}{\theta^{q^n}-t}.
\]
Furthermore, $f_\phi(u; t)$ extends to a meromorphic function on $\CC_\infty$ with simple poles at $t= \theta^{q^n}$, $n =0$, $1, \ldots$, with residues
\[
  \Res_{t=\theta^{q^n}} f_\phi(u; t) = -\alpha_n u^{q^n}.
\]
In particular, $\Res_{t=\theta} f_\phi(u; t) = -u$.
\end{proposition}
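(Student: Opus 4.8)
The plan is to compute $f_\phi(u;t)$ by expanding each term $\exp_\phi(u/\theta^{m+1})$ using the power series \eqref{expphidef} and then interchanging the order of summation. Writing $\exp_\phi(z) = \sum_{n=0}^\infty \alpha_n z^{q^n}$, we get
\[
  f_\phi(u;t) = \sum_{m=0}^\infty \sum_{n=0}^\infty \frac{\alpha_n u^{q^n}}{\theta^{q^n(m+1)}} t^m.
\]
For the interchange to be valid in $\power{\CC_\infty}{t}$, and in fact to land in $\TT$, I would check that the double series is summable: since $\exp_\phi$ is entire, $|\alpha_n| \to 0$ rapidly (indeed $v_\infty(\alpha_n)$ grows quadratically in $n$), so for fixed $u$ the quantity $|\alpha_n u^{q^n}/\theta^{q^n}| \to 0$, and summing the geometric series in $m$ first is legitimate. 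This is the one place where a little care is needed, but it is routine given that $\exp_\phi$ converges on all of $\CC_\infty$.

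Next I would sum the inner geometric series in $m$: for each fixed $n$,
\[
  \sum_{m=0}^\infty \frac{t^m}{\theta^{q^n(m+1)}}
  = \frac{1}{\theta^{q^n}} \cdot \frac{1}{1 - t/\theta^{q^n}}
  = \frac{1}{\theta^{q^n} - t},
\]
which converges since $|t/\theta^{q^n}| \le q^{1-q^n} < 1$ on the closed unit disk. Substituting back gives
\[
  f_\phi(u;t) = \sum_{n=0}^\infty \frac{\alpha_n u^{q^n}}{\theta^{q^n} - t},
\]
which is the claimed identity in $\TT$; membership in $\TT$ follows because each summand lies in $\TT$ with norm $|\alpha_n u^{q^n}| \cdot |\theta|^{-q^n} \to 0$, so the series converges in the complete normed ring $\TT$.

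For the meromorphic continuation, I would observe that the series $\sum_n \alpha_n u^{q^n}/(\theta^{q^n}-t)$ in fact converges uniformly on every bounded region of $\CC_\infty$ avoiding small neighborhoods of the points $\theta, \theta^q, \theta^{q^2}, \dots$: away from these poles, $|\theta^{q^n} - t|$ is bounded below (it is $\sim |\theta|^{q^n}$ for large $n$ regardless of $t$), while $|\alpha_n u^{q^n}| \to 0$ super-exponentially. Hence the sum defines a meromorphic function on all of $\CC_\infty$ whose only singularities are simple poles at $t = \theta^{q^n}$. Finally, to read off the residue at $t = \theta^{q^n}$, note that every term with index $\neq n$ is holomorphic there, and the $n$-th term is $\alpha_n u^{q^n}/(\theta^{q^n} - t)$, which has residue $-\alpha_n u^{q^n}$ at $t = \theta^{q^n}$ (the points $\theta^{q^n}$ are pairwise distinct since $\theta$ is transcendental). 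Specializing $n = 0$ and using $\alpha_0 = 1$ gives $\Res_{t=\theta} f_\phi(u;t) = -u$.

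The main obstacle is purely the justification of the rearrangement and of convergence of the resulting series in $\TT$ and as a meromorphic function on $\CC_\infty$; once one has the quadratic lower bound on $v_\infty(\alpha_n)$ coming from entireness of $\exp_\phi$, everything else is formal manipulation of geometric series and a residue computation.
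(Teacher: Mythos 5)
Your proposal is correct and follows essentially the same route as the paper: expand $\exp_\phi(u/\theta^{m+1})$ term by term, justify the interchange using the rapid decay of $|\alpha_n|$ coming from entireness of $\exp_\phi$, sum the geometric series in $m$ to obtain $\sum_n \alpha_n u^{q^n}/(\theta^{q^n}-t)$ in $\TT$, and obtain the continuation and residues by isolating finitely many terms (the paper phrases this as subtracting the partial sum $\sum_{n=0}^N$ and noting the tail converges on the disk $|z|<|\theta|^{q^{N+1}}$, which is the same idea as your uniform convergence away from the poles).
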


\begin{proof}
Expanding out $f_\phi(u;t)$ in $\power{\CC_\infty}{t}$, we have
\[
  f_\phi(u;t) = \sum_{m=0}^\infty \left( \sum_{n=0}^\infty \frac{\alpha_n u^{q^n}}{\theta^{q^n(m+1)}}\right) t^m.
\]
If we consider
\[
  \left| \frac{\alpha_n u^{q^n}}{\theta^{q^n(m+1)}} \right|
  = \left| \frac{\alpha_n u^{q^n}}{\theta^{q^n}} \right| \cdot
  \frac{1}{\lvert \theta \rvert^{q^n \cdot m}},
\]
the fact that $\exp_\phi(z)$ converges on all of $\CC_\infty$ implies that the first factor on the right is bounded independent of $n$, and thus the coefficients of $t^m$ in $f_\phi(u; t)$ go to $0$ in $\CC_\infty$ as $m \to \infty$.  Thus $f_\phi(u; t) \in \TT$.  Moreover, for fixed $n \geq 0$, we have the identity in $\TT$,
\[
  \frac{\alpha_n u^{q^n}}{\theta^{q^n}} \sum_{m=0}^\infty \left( \frac{t}{\theta^{q^n}} \right)^m
  = \frac{ \alpha_n u^{q^n}}{\theta^{q^n}-t}.
\]
Now again using that $\exp_\phi(z)$ is entire, and so $|\alpha_n|$ decay rapidly, it follows that
\[
  \lim_{n \to \infty} \left\lVert \frac{ \alpha_n u^{q^n}}{\theta^{q^n}-t} \right\rVert
 = 0.
\]
Therefore we have convergence in $\TT$ of
\[
  \sum_{n =0}^\infty \frac{ \alpha_n u^{q^n}}{\theta^{q^n}-t}  = \sum_{n=0}^\infty
  \frac{\alpha_n u^{q^n}}{\theta^{q^n}} \sum_{m=0}^\infty \left( \frac{t}{\theta^{q^n}} \right)^m
  = f_\phi(u; t).
\]
For the remainder of the proposition, we see that for any $N \geq 0$,
\[
  f_\phi(u; t)- \sum_{n =0}^N \frac{ \alpha_n u^{q^n}}{\theta^{q^n}-t} = \sum_{n =N+1}^\infty \frac{ \alpha_n u^{q^n}}{\theta^{q^n}-t},
\]
which converges for $t=z$ with $|z| < |\theta|^{q^{N+1}}$.  This provides the desired meromorphic continuation, and the residue calculations follow similarly.
\end{proof}

\section{The Carlitz module} \label{CarlitzSec}

In this section we consider Anderson generating functions for the Carlitz module, which provides our motivation for Theorem~\ref{MainThm}.  Let $C : \FF_q[t] \to \CC_\infty[\tau]$ be the Drinfeld module defined by
\[
  C_t = \theta + \tau.
\]
As proved originally by Carlitz~\cite{Carlitz35} (see \cite[Ch.~3]{GossBook} or \cite[Ch.~2]{ThakurBook}), we have that
\begin{equation} \label{expClogC}
  \exp_C(z) = \sum_{n=0}^\infty \frac{z^{q^n}}{D_n}, \quad
  \log_C(z) = \sum_{n=0}^\infty \frac{z^{q^n}}{L_n},
\end{equation}
where $D_0=L_0=1$ and for $n \geq 1$ we set
\begin{align*}
  [n] &= \theta^{q^n} - \theta, \\
  D_n &= [n] [n-1]^{q} \cdots [1]^{q^{n-1}},\\
  L_n &= (-1)^n [1][2] \cdots [n].
\end{align*}
The kernel of $\exp_C(z)$ is a rank $1$ lattice over $\FF_q[\theta]$ in $\CC_\infty$, generated by the element $\tpi$ from~\eqref{CarlitzPerFormula}, called the Carlitz period.

For $u \in \CC_\infty$, we have from Proposition~\ref{AGFproperties} that
\[
  f_{C}(u; t) = \sum_{m=0}^\infty \exp_C\left( \frac{u}{\theta^{m+1}} \right) t^m
 = \sum_{n = 0}^\infty \frac{u^{q^n}}{D_n(\theta^{q^n} - t)},
\]
If we let $\xi = \exp_C(u)$, then since $D_{n+1} = D_n^q (\theta^{q^{n+1}}-\theta)$,
\begin{align*}
  f_{C}(u; t)^{(1)} = \sum_{n=0}^\infty \frac{u^{q^{n+1}}}{D_n^q(\theta^{q^{n+1}} - t)}
  &= \sum_{n=0}^\infty \frac{u^{q^n} (\theta^{q^n}-\theta)}{D_n (\theta^{q^n}-t)} \\
  &= \sum_{n=0}^\infty \frac{u^{q^n} (\theta^{q^n}-t)}{D_n (\theta^{q^n}-t)}
  + \sum_{n=0}^\infty \frac{u^{q^n} (t-\theta)}{D_n (\theta^{q^n}-t)}\\
  &= \xi + (t-\theta)f_{C}(u; t).
\end{align*}
Thus, if we take $\Delta = \tau - (t-\theta)$, then
\begin{equation} \label{AGFCarlitzdiffeq}
  \Delta\bigl( f_{C}(u; t) \bigr) = \xi.
\end{equation}

Now if $u=\tpi$, then $\xi=\exp_C(\tpi)=0$, and so
\begin{equation} \label{SolCarlitz}
  \Sol(\Delta) = \FF_q(t)\cdot f_C(\tpi; t).
\end{equation}
By our analysis in Example~\ref{Ex:omega} we must have $f_{C}(\tpi; t) = \delta \cdot \omega_C(t)$ for some $\delta \in \FF_q(t)$.  Choosing the same root of $-\theta$ as in Example~\ref{Ex:omega} and using \eqref{CarlitzPerFormula}, we find that $f_C(\tpi;t) - \omega_C(t)$ is regular $t=\theta$.  The only element of $\Sol(\Delta)$ that is regular at $t=\theta$ is $0$, and so
\[
  f_{C}(\tpi; t) = \omega_C(t),
\]
which is the same as \cite[Prop.~5.1.3]{ABP04}.

For more general $u \in \CC_\infty$, suppose $\xi = \exp_C(u)$ is within the radius of convergence of $\log_C(z)$, i.e.\ $|\xi| < |\theta|^{q/(q-1)}$.  Then as studied in \cite[\S 6]{P08}, let
\begin{equation} \label{LCdef}
  \cL_C(\xi; t) = \xi + \sum_{n=1}^\infty \frac{\xi^{q^n}}{(t-\theta^q)(t-\theta^{q^2}) \cdots (t-\theta^{q^n})}.
\end{equation}
If $|\xi| = c^{q/(q-1)}$ for $0 \leq c < |\theta|$, then
\[
  \left\lVert \frac{\xi^{q^n}}{(t-\theta^q)(t-\theta^{q^2}) \cdots (t-\theta^{q^n})} \right\rVert
  \leq \left( \frac{c}{|\theta|} \right)^{q^{n+1}/(q-1)} \cdot |\theta|^{q/(q-1)},
\]
which goes to $0$ as $n \to \infty$.  Therefore $\cL_C(\xi; t) \in \TT$, and $\cL_C(\xi; z)$ converges for $|z| < |\theta|^q$.  It follows from \eqref{expClogC} that
\begin{equation} \label{LCtheta}
  \cL_C(\xi;\theta) = \log_C(\xi) = u.
\end{equation}
Now from~\eqref{LCdef} we see that
\[
  \cL_C(\xi; t)^{(1)} = (t-\theta^q) \cL_C(\xi; t)  - (t-\theta^q) \xi,
\]
and so
\[
  \Delta \left(-\frac{\cL_C(\xi;t)}{t-\theta}\right) = \xi.
\]
Subtracting from \eqref{AGFCarlitzdiffeq} and comparing with \eqref{omegaCdiffeq}, we find that
\[
  f_C(u;t) + \frac{\cL_C(\xi;t)}{t-\theta} \in \FF_q(t) \cdot \omega_C(t).
\]
However, by Proposition~\ref{AGFproperties} and \eqref{LCtheta} we see that this expression is regular at $t=\theta$, and so we must have
\begin{equation}
  \cL_C(\xi; t) = -(t-\theta) f_C(u; t)
\end{equation}
whenever $\exp_C(u) = \xi$ and $|\xi| < |\theta|^{q/(q-1)}$.

The function $\cL_C(\xi;t)$ satisfies an additional functional equation in the $\xi$ parameter.  Suppose that both $\theta\xi$ and $\xi^q$ lie within the radius of convergence of $\log_C(z)$.  Then we have
\begin{align*}
  \cL_C(\theta\xi + \xi^q;t) &= \theta\xi + \xi^q + \sum_{n=1}^\infty
  \frac{\theta^{q^n}\xi^{q^n} + \xi^{q^{n+1}}}{(t-\theta^q) \cdots (t-\theta^{q^n})} \\
  &= \theta\xi + \xi^q + \frac{\theta^q\xi^q}{t-\theta^q} + \sum_{n=2}^\infty
  \frac{\theta^{q^n}\xi^{q^n} + \xi^{q^n}}
  {(t-\theta^q) \cdots (t-\theta^{q^{n-1}})} \cdot \frac{t-\theta^{q^n}}{t-\theta^{q^n}} \\
  &= \theta\xi +\xi^q + \frac{\theta^q\xi^q}{t-\theta^q} + t \cL_C(\xi;t) - t\xi - \frac{t\xi^q}{t-\theta^q}\\
  &=t\cL_C(\xi;t) - (t-\theta)\xi.
\end{align*}
Thus we have
\begin{equation} \label{LCcompat}
  \cL_C (C_t(\xi); t) = t \cL_C(\xi; t) - (t-\theta)\xi.
\end{equation}

\begin{remark}
When $\xi \in \oK$, we can specialize \eqref{LCcompat} at $t=\theta$ and recover the identity
\[
  \log_C(\theta \xi + \theta^q) = \theta\log_C(\xi).
\]
However, starting with this equality of Carlitz logarithms, it also follows
from \cite[Thm.~3.1.1]{ABP04} and \cite[Thm.~6.3.2]{P08} that there
must be non-trivial $\oK(t)$-linear relation among $\Omega_C(t)$,
$\Omega_C(t) \cL_C (\xi; t)$, and $\Omega_C(t) \cL_C(C_t(\xi);t)$, that
specializes to produce this identity.  Thus a $\oK(t)$-linear relation
among $1$, $\cL_C(\xi;t)$, and $\cL_C(C_t(\xi);t)$ is expected from
the general theory, and \eqref{LCcompat} provides it explicitly.
\end{remark}

\section{Shadowed partitions} \label{SP}

We now recall information about shadowed partitions and their connections to Drinfeld exponential and logarithm functions from~\cite{EP}.  A partition of a set $S$ is a collection of subsets of $S$ that are pairwise disjoint, and whose union is equal to $S$ itself.  If $S\subseteq \ZZ$ and $j\in \ZZ$, then we set $S+j:=\{i+j: i\in S \}$.  For $r$, $n \in \ZZ^+$, we define
\begin{equation} \label{prndefn}
P_r(n):=\left\{(S_1,\, S_2, \dots,\, S_r) \Biggm| \parbox{2.75in}{
\begin{center} Each $S_i\subseteq \{0,\, 1, \dots,\, n-1\}$, \\
 and $\{S_i+j: 1\leq i \leq r,\, 0\leq j \leq i-1\}$ \\ forms a partition of  $\{0,\, 1, \dots,\, n-1\}$.
 \end{center}} \right\}.
\end{equation}
We also set $P_r(0):=\{(\emptyset, \dots, \emptyset)\}$ to be the singleton set containing the $r$-tuple of empty sets, and $P_r(-n):=\emptyset$.  The elements of $P_r(n)$ are called the order $r$ shadowed partitions of $n$: the sets $S_i$ together with their shadows ($S_i + j$, $1 \leq j \leq i-1$) form a partition of $n$.  We list a few facts about shadowed partitions below, and for more details the reader is directed to \cite{EP}.  For additional examples of using shadowed partitions in the theory of vectorial modular forms and $\tau$-recurrent sequences, see~\cite{EPetrov}.

For $1 \leq i \leq r$, set $P_r^i(n):=\{(S_1,\, S_2, \dots,\, S_r) \in P_r(n): 0 \in S_i\}$.  The sequence of $r$-step Fibonacci numbers $\{F^{[r]}_n\}$ is defined by $F^{[r]}_{-n}=0$, $F^{[r]}_0=1$, $F^{[r]}_n=\sum_{i=n-r}^{n-1} F^{[r]}_i$ ($n \in \ZZ^+$).  Also for $S \subseteq \NN$ finite, we define the weight $w(S)$ of $S$ to be the integer
\[
  w(S) = \sum_{i \in S} q^i.
\]

\begin{lemma}[{See \cite[Lems.~2.1, 2.2]{EP}}] \label{facts1}
Let $n \geq 0$.
\begin{enumerate}
\item[(a)] $\{P^i_r(n): 1 \leq i \leq r \}$ is a partition of $P_r(n)$.
\item[(b)] For $1\leq i \leq r$, there is a bijection $\Pi_i: P_r(n-i) \to P_r^i(n)$ defined by
\[
  \Pi_i : (S_1,\, S_2, \dots,\, S_r)\mapsto (S_1+i,\, S_2+i,\dots,\{0\}\cup(S_i+i),\dots, \, S_r+i).
\]
Thus, $\{ \Pi_i (P_r(n-i)) : 1 \leq i \leq r \}$ is a partition of $P_r(n)$.
\item[(c)] For all $r>0$ we have $|P_r(n)|=F_n^{[r]}$.
\item[(d)] For $1 \leq i \leq r$, there is an injection $\Psi_i : P_r(n-i) \to P_r(n)$ defined by
\[
  \Psi_i : (S_1, \dots, S_r) \mapsto (S_1, \dots, S_i \cup \{ n-i \}, \dots, S_r).
\]
Furthermore, $\{ \Psi_i(P_r(n-i)) : 1 \leq i \leq r \}$ is a partition of $P_r(n)$.
\item[(e)] For $S_i \subseteq \{ 0, 1, \dots, n-1 \}$, the $r$-tuple $(S_1, \dots, S_r)$ is in $P_r(n)$ if and only if
\[
  \sum_{i=1}^r (q^i-1)w(S_i) = q^n-1.
\]
\end{enumerate}
\end{lemma}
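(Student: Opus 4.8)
The five assertions are best handled in the order (a), (b), (c), (d), (e), since each later part leans on the earlier ones. For part~(a), the point is that for any $(S_1,\dots,S_r)\in P_r(n)$ with $n\geq 1$, the integer $0$ lies in exactly one of the sets of the partition $\{S_i+j : 1\leq i\leq r,\ 0\leq j\leq i-1\}$; since $0\in S_i+j$ forces $j=0$ (as $j\geq 0$ and the elements are nonnegative) and hence $0\in S_i$, exactly one index $i$ has $0\in S_i$. This is precisely the statement that the $P^i_r(n)$ are pairwise disjoint with union $P_r(n)$. (For $n=0$ the claim is vacuous once one checks the convention.) For part~(b), I would verify directly that $\Pi_i$ lands in $P^i_r(n)$: shifting every $S_k$ by $i$ turns a partition of $\{0,\dots,n-i-1\}$ into a partition of $\{i,\dots,n-1\}$, and adjoining $\{0\}$ to the block $S_i+i$ together with its shadows $\{0\}+j=\{j\}$ for $0\leq j\leq i-1$ fills in exactly $\{0,\dots,i-1\}$, so the result is a partition of $\{0,\dots,n-1\}$ with $0\in$ (the new $S_i$). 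Injectivity is clear since one recovers the preimage by deleting $0$ from the $i$-th coordinate and shifting back by $-i$; surjectivity onto $P^i_r(n)$ follows because any element with $0\in S_i$ must (by the shadow-filling argument run backwards) contain $\{0,\dots,i-1\}$ among the blocks coming from the $i$-th coordinate, so removing them and shifting gives a valid preimage. Combining with~(a) yields the stated partition $\{\Pi_i(P_r(n-i))\}$.

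Part~(c) is then immediate by induction on $n$: the recursion $|P_r(n)|=\sum_{i=1}^r |P_r(n-i)|$ coming from the partition in~(b), together with the base cases $|P_r(0)|=1$ and $|P_r(-n)|=0$, is exactly the defining recursion of $F^{[r]}_n$. For part~(d), I would again check that $\Psi_i$ is well-defined: inserting the single new largest element $n-i$ into $S_i$ adds the blocks $\{n-i\}+j=\{n-i+j\}$ for $0\leq j\leq i-1$, i.e.\ exactly $\{n-i,\dots,n-1\}$, to a partition of $\{0,\dots,n-i-1\}$, producing a partition of $\{0,\dots,n-1\}$; injectivity follows by deleting $n-i$ from the $i$-th coordinate. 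That the images partition $P_r(n)$ is the ``dual'' of~(b): every $(S_1,\dots,S_r)\in P_r(n)$ has $n-1$ lying in exactly one shadow block, $n-1\in S_i+j$; since the maximal element of $S_i+j$ is at most $(n-1-i+1)+(i-1)=n-1$ with equality only when $\max S_i=n-i$ and $j=i-1$, and the whole range $\{n-i,\dots,n-1\}$ is then forced to be covered by that coordinate, deleting those elements and reading off the $i$-th coordinate's remaining part gives a unique preimage under $\Psi_i$.

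Part~(e) is the arithmetic heart. The forward direction is a weight count: if $(S_1,\dots,S_r)\in P_r(n)$, then $\{0,\dots,n-1\}$ is the disjoint union of the $S_i+j$ ($1\leq i\leq r$, $0\leq j\leq i-1$), so summing $q^{(\cdot)}$ over all of them gives $\sum_{i=1}^r\sum_{j=0}^{i-1} w(S_i+j)=\sum_{k=0}^{n-1}q^k=\tfrac{q^n-1}{q-1}$; using $w(S+j)=q^j w(S)$ this is $\sum_{i=1}^r w(S_i)(1+q+\cdots+q^{i-1})=\sum_{i=1}^r w(S_i)\tfrac{q^i-1}{q-1}$, and multiplying by $q-1$ gives the identity. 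The converse is the delicate part: given $S_i\subseteq\{0,\dots,n-1\}$ with $\sum (q^i-1)w(S_i)=q^n-1$, I must show the shadowed sets actually partition $\{0,\dots,n-1\}$, i.e.\ they cover with no overlaps. I expect this to be the main obstacle, and I would argue it by induction on $n$ using base-$q$ digit considerations: the condition says $\sum_i w(S_i)+\sum_i q^i w(S_i)=q^n-1+2\sum_i w(S_i)$, or more usefully that $\sum_{i,j} q^j w(S_i)$ has the same value $\tfrac{q^n-1}{q-1}$ as the weight of a genuine partition; the content is that a sum of distinct powers of $q$ equals $1+q+\cdots+q^{n-1}$ only if the multiset of exponents is exactly $\{0,1,\dots,n-1\}$ (no carries are possible since all coefficients would have to stay in $\{0,\dots,q-1\}$, and a repeated exponent or a gap would violate the digit expansion). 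Translating ``the exponents occurring in $\{S_i+j\}$ with multiplicity are exactly $\{0,\dots,n-1\}$'' back into set language gives both disjointness and covering, hence membership in $P_r(n)$. Since this equivalence is proved in \cite[Lem.~2.2]{EP}, I would either reproduce this digit argument or simply cite it; the rest of the lemma is elementary bookkeeping with the bijections $\Pi_i$ and $\Psi_i$.
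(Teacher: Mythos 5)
Your write-up is broadly sound, but it is worth noting that the paper itself only proves part (d) and cites \cite[Lems.~2.1--2.2]{EP} for (a)--(c) and (e), so most of your work is reconstructing the cited proofs. Your arguments for (a)--(c) are correct and match the standard ones. For (d) you take a genuinely different route from the paper: the paper only proves that the images $\Psi_i(P_r(n-i))$ are pairwise disjoint (if $n-i \in S_i$ then the shadows of $n-i$ occupy $\{n-i,\dots,n-1\}$, so $n-j \notin S_j$ for $j<i$) and gets the covering for free from the cardinality identity $|P_r(n)| = \sum_i |P_r(n-i)|$ supplied by (a)--(c); you instead show directly that the element $n-1$ lies in exactly one block $S_i+j$, that this forces $j=i-1$ and $n-i\in S_i$, and that deleting $n-i$ yields the unique preimage. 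Both are valid; yours is self-contained and dual to (b), while the paper's is shorter because it recycles the count from (c).

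The one genuine soft spot is your sketch of the converse of (e). After rewriting the hypothesis as $\sum_{i}\sum_{j=0}^{i-1} w(S_i+j) = 1+q+\cdots+q^{n-1}$, the exponents occurring across the different blocks $S_i+j$ are \emph{not} a priori distinct --- their possible coincidence is exactly the disjointness you are trying to prove --- and the multiplicity of a given exponent is bounded only by something like $\sum_i \min(i,k+1)$, which can reach or exceed $q$ (e.g.\ already the exponent $0$ can occur in up to $r$ blocks). So the assertion that ``no carries are possible since all coefficients would have to stay in $\{0,\dots,q-1\}$'' assumes what is to be proved: for instance $q$ copies of $q^0$ and no copy of $q^1$ produce the same total as one copy of each. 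Ruling such configurations out requires using the combinatorial structure of the shadows (or an induction on $n$ after first observing that the total being $<q^n$ forces every $S_i\subseteq\{0,\dots,n-i\}$), which is what \cite[Lem.~2.2]{EP} does. Since you explicitly offer to fall back on the citation --- as the paper itself does --- this does not sink the proposal, but the self-contained digit argument as written does not close.
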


\begin{proof}
Parts (a)--(c) and (e) were proved in \cite[Lems.~2.1--2.2]{EP}.  For (d), it suffices by (a) and (b) to show that for $i \neq j$, the images of $\Psi_i$ and $\Psi_j$ are disjoint.  We can assume that $i > j$.  Suppose that $(S_1', \dots, S_r') \in P_r(n)$ and $n-i \in S_i'$.  Then it is not possible for any $k > n-i$ to appear in any of the $S_1', \dots, S_r'$, in particular for $n-j$.  Thus it is not possible for $\Psi_i(S_1, \dots, S_r)$ to be the same as $\Psi_j(\widetilde{S}_1, \dots, \widetilde{S}_r)$ for any choices of shadowed partitions.
\end{proof}

To help simplify our formulas, we shall usually denote $(S_1,\dots, S_r)\in P_r(n)$ by $\bS$. We fix the following notation for the rest of the paper
\[
|{\bS}| :=\sum_{i=1}^r |S_i|,\quad
\bigcup {\bS} :=\bigcup_{i=1}^r S_i.
\]
For $\bA = (A_1,\dots, A_r)\in \CC_\infty^r$ and ${\bS}\in P_r(n)$ we write
\begin{equation}
 \bA^{\bS} := \prod_{i=1}^r A_i^{w(S_i)}.
\end{equation}
Note that ${\bA}^\emptyset = 1$.  We also write
\[
  D_n(\bS) := \prod_{i \in \cup \bS} [n-i]^{q^i}
\]
and
\[
  L(\bS) := \prod_{j=1}^r \prod_{i \in S_j} (-[i+j]).
\]

Now let $\phi$ be a Drinfeld module of rank $r$ over $\CC_\infty$ defined as in \eqref{phidef}, with corresponding exponential and logarithm functions, $\exp_\phi(z)$ and $\log_\phi(z)$, from \eqref{expphidef} and \eqref{logphidef}.  The following theorem demonstrates how to use shadowed partitions to express the power series coefficients of $\exp_\phi(z)$ and $\log_\phi(z)$ concretely in terms the defining coefficients of~$\phi$.

\begin{theorem}[{See \cite[Thms.~3.1, 3.3]{EP}}] \label{alphabeta}
Let $\phi$ be a Drinfeld module of rank $r$, defined by $\phi_t = \theta + A_1 \tau + \dots + A_r \tau^r$, together with exponential $\exp_\phi(z) = \sum \alpha_n z^{q^n}$ and logarithm $\log_\phi(z) = \sum \beta_n z^{q^n}$.  Letting $\bA = (A_1, \dots, A_r)$, we have
\[
  \alpha_n = \sum_{\bS \in P_r(n)} \frac{\bA^{\bS}}{D_n(\bS)}, \quad
  \beta_n = \sum_{\bS \in P_r(n)} \frac{\bA^{\bS}}{L(\bS)}.
\]
\end{theorem}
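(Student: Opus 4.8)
The plan is to prove both identities simultaneously by the same mechanism: show that each side satisfies the same first-order $\tau$-difference recursion with the same initial term, and then induct on $n$.

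For the exponential, I would first extract a recursion for the $\alpha_n$. Plugging $\exp_\phi(z) = \sum_m \alpha_m z^{q^m}$ into the defining relation $\exp_\phi(\theta z) = \phi_t(\exp_\phi(z)) = \theta \exp_\phi(z) + \sum_{i=1}^{r} A_i \exp_\phi(z)^{q^i}$ and using additivity of the $q$-power map to write $\exp_\phi(z)^{q^i} = \sum_m \alpha_m^{q^i} z^{q^{m+i}}$, comparison of the coefficient of $z^{q^n}$ yields, for $n \geq 1$,
\[
  [n]\,\alpha_n = \sum_{i=1}^{\min(r,n)} A_i\, \alpha_{n-i}^{q^i}, \qquad \alpha_0 = 1.
\]
Next, set $\tilde\alpha_n := \sum_{\bS \in P_r(n)} \bA^{\bS}/D_n(\bS)$ for $n \geq 0$ and $\tilde\alpha_n := 0$ for $n < 0$; since $P_r(0)$ is the singleton $\{(\emptyset,\dots,\emptyset)\}$ with $\bA^{\emptyset} = D_0(\emptyset) = 1$, we have $\tilde\alpha_0 = 1$. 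The heart of the argument is to split $P_r(n) = \bigsqcup_{i=1}^{r} P_r^i(n)$ by Lemma~\ref{facts1}(a) and reparametrize each block by the bijection $\Pi_i : P_r(n-i) \to P_r^i(n)$ of Lemma~\ref{facts1}(b), which requires checking the transformation rules $\bA^{\Pi_i(\bS')} = A_i\,(\bA^{\bS'})^{q^i}$ and $D_n(\Pi_i(\bS')) = [n]\,D_{n-i}(\bS')^{q^i}$. Both follow from tracking how $\Pi_i$ acts: it shifts every $S_j'$ up by $i$, which multiplies each weight $w(S_j')$ by $q^i$ and replaces $\bigcup \bS'$ by $(\bigcup\bS')+i$, and it adjoins a single $0$ to $S_i'$, contributing one extra factor $A_i$ to $\bA^{\bS}$ and one extra factor $[n-0]^{q^0} = [n]$ to $D_n(\bS)$. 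Hence each summand equals $\frac{A_i}{[n]}\bigl(\bA^{\bS'}/D_{n-i}(\bS')\bigr)^{q^i}$; summing over $\bS'$ and using additivity of $x \mapsto x^{q^i}$ gives $[n]\,\tilde\alpha_n = \sum_{i=1}^{\min(r,n)} A_i\,\tilde\alpha_{n-i}^{q^i}$. As $\alpha_n$ and $\tilde\alpha_n$ satisfy the same recursion and base case, induction gives $\alpha_n = \tilde\alpha_n$.

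For the logarithm, I would start from the companion functional equation $\log_\phi(\phi_t(z)) = \theta\,\log_\phi(z)$, valid because $\log_\phi$ is the compositional inverse of $\exp_\phi$: applying $\log_\phi$ to $\exp_\phi(\theta\log_\phi(z)) = \phi_t(\exp_\phi(\log_\phi(z))) = \phi_t(z)$ gives it. Writing $\phi_t(z) = \sum_{i=0}^{r} A_i z^{q^i}$ with $A_0 := \theta$, expanding $\log_\phi(\phi_t(z)) = \sum_n \beta_n \bigl(\sum_i A_i z^{q^i}\bigr)^{q^n} = \sum_{n,i} \beta_n A_i^{q^n} z^{q^{n+i}}$, and comparing coefficients of $z^{q^m}$ gives, for $m \geq 1$,
\[
  -[m]\,\beta_m = \sum_{i=1}^{\min(r,m)} A_i^{q^{m-i}}\,\beta_{m-i}, \qquad \beta_0 = 1.
\]
Now set $\tilde\beta_m := \sum_{\bS \in P_r(m)} \bA^{\bS}/L(\bS)$ (with $\tilde\beta_m := 0$ for $m<0$ and $\tilde\beta_0 = 1$) and reparametrize $P_r(m) = \bigsqcup_{i=1}^{r} \Psi_i(P_r(m-i))$ via the injections $\Psi_i : P_r(m-i) \to P_r(m)$ of Lemma~\ref{facts1}(d). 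The required transformation rules are $\bA^{\Psi_i(\bS')} = A_i^{q^{m-i}}\,\bA^{\bS'}$ (adjoining $m-i$ to $S_i'$ adds $q^{m-i}$ to $w(S_i')$) and $L(\Psi_i(\bS')) = (-[m])\,L(\bS')$ (the new element $m-i$ in slot $i$ contributes $-[(m-i)+i] = -[m]$), so each summand equals $-\frac{1}{[m]}A_i^{q^{m-i}}\,\bA^{\bS'}/L(\bS')$, and summing yields $-[m]\,\tilde\beta_m = \sum_{i=1}^{\min(r,m)} A_i^{q^{m-i}}\,\tilde\beta_{m-i}$. Induction then gives $\beta_m = \tilde\beta_m$.

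I expect the main obstacle to be purely bookkeeping rather than conceptual: establishing the four multiplicativity identities for $\bA^{\bS}$, $D_n(\bS)$, and $L(\bS)$ under $\Pi_i$ and $\Psi_i$ cleanly, in particular keeping precise track of how the weight $w$ and the index set $\bigcup\bS$ behave under shifting by $i$ and under adjoining a single element, and making sure that additivity of the $q$-power map is invoked only over the genuinely disjoint, bijectively indexed blocks guaranteed by Lemma~\ref{facts1}(a),(b),(d). No individual step is deep; the care is entirely in the combinatorial accounting and in correctly handling the truncation $i \leq \min(r,n)$ (equivalently the vanishing $P_r(n-i) = \emptyset$ for $i > n$).
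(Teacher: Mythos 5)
Your proof is correct: the coefficient recursions $[n]\alpha_n=\sum A_i\alpha_{n-i}^{q^i}$ and $-[m]\beta_m=\sum A_i^{q^{m-i}}\beta_{m-i}$ are derived properly, and the four transformation identities for $\bA^{\bS}$, $D_n(\bS)$, $L(\bS)$ under $\Pi_i$ and $\Psi_i$ all check out. The paper itself only cites \cite{EP} for this theorem, but your argument --- matching the recursion against the decompositions of $P_r(n)$ from Lemma~\ref{facts1}(b) and (d) --- is exactly the mechanism the paper deploys in the proof of Lemma~\ref{Blemma} for the deformed coefficients $\cB_n(t)$, so you have reconstructed essentially the intended proof.
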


\begin{example}
We write out a few cases to clarify the formulas in Theorem~\ref{alphabeta}. Let the superscript on $\alpha$ or $\beta$ indicate the rank $r$ of the corresponding module. Then for $r=2$ we obtain
\begin{align*}
\alpha_3^{(2)} &= \frac{A_1^{q^2+q+1}}{[1]^{q^2}[2]^{q}[3]}+\frac{A_1 A_2^q}{[2]^q[3]}
+\frac{A_1^{q^2}A_2}{[1]^{q^2}[3]}, \\
\beta_3^{(2)} &= -\frac{A_1^{q^2+q+1}}{[1][2][3]}+\frac{A_1 A_2^q}{[1][3]}
+\frac{A_1^{q^2}A_2}{[2][3]},
\end{align*}
and for $r=3$,
\begin{align*}
\alpha_3^{(3)} &= \frac{A_1^{q^2+q+1}}{[1]^{q^2}[2]^{q}[3]} + \frac{A_1 A_2^q}{[2]^q[3]} + \frac{A_1^{q^2}A_2}{[1]^{q^2}[3]} + \frac{A_3}{[3]}, \\
\beta_3^{(3)} &= -\frac{A_1^{q^2+q+1}}{[1][2][3]} + \frac{A_1 A_2^q}{[1][3]} +
\frac{A_1^{q^2} A_2}{[2][3]} - \frac{A_3}{[3]}.
\end{align*}
\end{example}

\section{Explicit formulas for Anderson generating functions} \label{ExplAGF}

In this section we prove the main result of this paper, Theorem~\ref{MainThm}, which yields defomations of $\log_\phi(\xi)$ in the Tate algebra $\TT$ and explicit formulas for Anderson generating functions for arbitrary Drinfeld modules.  These functions and formulas depend only on the Drinfeld module's defining coefficients, much like what we saw for the Carlitz module in \S\ref{CarlitzSec}.  Our main tools are shadowed partitions combined with their use in Theorem~\ref{alphabeta}.  We continue with the notation of the previous sections and assume that a Drinfeld module $\phi$ of rank $r$ has been chosen with
\begin{equation}\label{phi}
  \phi_t = \theta + A_1 \tau + \dots + A_r \tau^r, \quad A_r \neq 0.
\end{equation}
When convenient, we will sometimes set $A_0 = \theta$.
Let $\La:=\La_\phi$ be the $\FF_q[\theta]$-lattice of rank $r$ in $\CC_\infty$ that is the kernel of $\exp_\phi(z)$, and let $\{\omega_1,\dots, \omega_r\}$ be an $\FF_q[\theta]$-basis of $\La$.  Fix
\[
  \Delta_\phi = A_r \tau^r + \dots + A_1\tau - (t-\theta) \in \CC_\infty(t)[\tau].
\]
The following result due to Pellarin provides analogues of the results \eqref{AGFCarlitzdiffeq} and \eqref{SolCarlitz} for the Carlitz module for the Anderson generating functions of $\phi$.

\begin{proposition}[{Pellarin~\cite[\S 4.2]{Pellarin08}}] \label{AGFphispace}
For $u \in \CC_\infty$, let
\[
  f_\phi(u; t) = \sum_{n=0}^\infty \frac{\alpha_n u^{q^n}}{\theta^{q^n}-t} \in \TT.
\]
\begin{enumerate}
\item[(a)] For $u \in \CC_\infty$, set $\xi = \exp_\phi(u)$.  Then
\[
  \Delta_\phi \bigl( f_\phi(u; t) \bigr) = \xi.
\]
\item[(b)] The set $\{ f_\phi(\omega_1; t), \dots, f_\phi(\omega_r; t) \} \subseteq \TT$ is an $\FF_q(t)$-basis for $\Sol(\Delta_\phi)$.
\end{enumerate}
\end{proposition}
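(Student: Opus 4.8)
The plan is to verify part (a) by a direct Frobenius-twist computation analogous to the Carlitz case in \S\ref{CarlitzSec}, and then to derive part (b) from (a) together with the dimension bound \eqref{SolPhiDim} and the pole structure of $f_\phi$ from Proposition~\ref{AGFproperties}. For (a), I would start from the identity $f_\phi(u;t) = \sum_{n\geq 0}\alpha_n u^{q^n}/(\theta^{q^n}-t)$ in $\TT$ (valid by Proposition~\ref{AGFproperties}), apply the twist $f \mapsto f^{(k)}$ term by term to obtain $f_\phi(u;t)^{(k)} = \sum_{n\geq 0}\alpha_n^{q^k} u^{q^{n+k}}/(\theta^{q^{n+k}}-t)$, and then assemble $\Delta_\phi(f_\phi(u;t)) = \sum_{k=1}^r A_k f_\phi(u;t)^{(k)} - (t-\theta)f_\phi(u;t)$. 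Reindexing the $k$-th twisted sum by $m = n+k$ and collecting the coefficient of $u^{q^m}/(\theta^{q^m}-t)$, the recursion satisfied by the exponential coefficients --- namely $\theta^{q^m}\alpha_m - \theta\alpha_m = \sum_{k=1}^{r}A_k\alpha_{m-k}^{q^k}$, which is exactly the coefficient comparison in the functional equation $\exp_\phi(\theta z) = \phi_t(\exp_\phi(z))$ --- should make almost everything cancel, leaving only the $m=0$ term, which is $\alpha_0 u^{q^0} = u$ up to the telescoping trick $\theta^{q^n}-t = (\theta^{q^n}-t)$ versus $t-\theta$ that produces the constant $\xi = \exp_\phi(u)$; more precisely, the manipulation mirrors the Carlitz display where one writes $(\theta^{q^n}-\theta) = (\theta^{q^n}-t) + (t-\theta)$ inside the sum. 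The bookkeeping over the finitely many twists $1\le k\le r$ is the only place requiring care, but it is formally identical to the rank-$1$ computation already carried out in the excerpt.

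For part (b), first note that each $f_\phi(\omega_i;t)$ lies in $\TT \subseteq \LL$, and since $\exp_\phi(\omega_i)=0$ by definition of the lattice $\La$, part (a) gives $\Delta_\phi(f_\phi(\omega_i;t)) = 0$, so all $r$ functions lie in $\Sol(\Delta_\phi)$. Since $\deg_\tau \Delta_\phi = r$ (as $A_r\neq 0$), the discussion preceding Example~\ref{Ex:omega} shows $\dim_{\FF_q(t)}\Sol(\Delta_\phi) \leq r$ via the companion-matrix argument and \eqref{SolPhiDim}. It therefore suffices to prove that $f_\phi(\omega_1;t),\dots,f_\phi(\omega_r;t)$ are linearly independent over $\FF_q(t)$.

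The linear independence is the main obstacle, and I would handle it using the residues from Proposition~\ref{AGFproperties}. Suppose $\sum_{i=1}^r c_i(t)\, f_\phi(\omega_i;t) = 0$ with $c_i(t)\in\FF_q(t)$, not all zero; clearing denominators we may take $c_i \in \FF_q[t]$. Taking the residue at $t=\theta$ and using $\Res_{t=\theta}f_\phi(\omega_i;t) = -\omega_i$ (note $c_i(t)$ is regular and takes value $c_i(\theta)\in\FF_q$ at $t=\theta$, since $\FF_q[t]$ specializes to $\FF_q$), we get $\sum_i c_i(\theta)\,\omega_i = 0$ in $\CC_\infty$. Because $\{\omega_1,\dots,\omega_r\}$ is an $\FF_q[\theta]$-basis of $\La$ and in particular is $\FF_q$-linearly independent (indeed $\FF_q(\theta)$-linearly independent inside $\CC_\infty$), this forces $c_i(\theta)=0$ for all $i$, i.e.\ $(t-\theta) \mid c_i(t)$. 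Now I would iterate: dividing each $c_i$ by $(t-\theta)$ changes nothing in the relation, so by descent on $\deg c_i$ all $c_i$ vanish --- alternatively, one can apply the Frobenius twist to the relation and take residues at $t=\theta^{q^n}$ for successive $n$, using $\Res_{t=\theta^{q^n}}f_\phi(\omega_i;t) = -\alpha_n\omega_i^{q^n}$, but the $(t-\theta)$-divisibility descent is cleaner. This contradiction establishes independence, so the $r$ functions form a basis and (b) follows. The one subtlety to watch is ensuring the residue operation is compatible with $\FF_q(t)$-linear combinations and meromorphic continuation, which is exactly what Proposition~\ref{AGFproperties} provides.
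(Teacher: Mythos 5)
Your proposal follows essentially the same route as the paper: part (a) by termwise twisting of the partial-fraction expansion and the coefficient recursion $\sum_{i=1}^r A_i\alpha_{n-i}^{q^i}=(\theta^{q^n}-\theta)\alpha_n$ coming from $\phi_t(\exp_\phi(z))=\exp_\phi(\theta z)$, and part (b) by combining the dimension bound $\dim_{\FF_q(t)}\Sol(\Delta_\phi)\leq r$ with linear independence extracted from the residues at $t=\theta$. One correction to (b): for $c_i(t)\in\FF_q[t]$ the value $c_i(\theta)$ lies in $\FF_q[\theta]$, not in $\FF_q$, so the input you actually need is the $\FF_q[\theta]$-linear (equivalently $\FF_q(\theta)$-linear) independence of $\omega_1,\dots,\omega_r$ --- which you do invoke --- and once $c_i(\theta)=0$ is known you are finished immediately, since a polynomial in $\FF_q[t]$ vanishing at the transcendental element $\theta$ is the zero polynomial; the $(t-\theta)$-divisibility descent is therefore unnecessary and, as written, does not stay inside $\FF_q[t]$ (one would have to work in $\FF_q(\theta)[t]$ to divide by $t-\theta$). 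Finally, in (a) the term surviving the cancellation is the full sum $\sum_{n}\alpha_n u^{q^n}=\xi$, not merely the $m=0$ term $u$; your parenthetical appeal to the identity $(\theta^{q^n}-\theta)=(\theta^{q^n}-t)+(t-\theta)$ is the correct mechanism and matches the paper's computation.
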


\begin{proof}
The proof of (a) follows along the similar lines to the arguments for the Carlitz module.  By the functional equation $\phi_t(\exp_\phi(z)) = \exp_\phi(\theta z)$, it follows that for $n \geq 0$,
\[
  \sum_{i=0}^n A_i \alpha_{n-i}^{q^i} = \theta^{q^n} \alpha_n,
\]
where we set $A_0=\theta$, $A_i=0$ if $i > r$, and $\alpha_n = 0$ if $n < 0$.  Therefore,
\begin{align*}
  \Delta_\phi \bigl( f_{\phi}(u; t) \bigr) &= \sum_{i=1}^r A_i \sum_{n=0}^\infty \frac{\alpha_n^{q^i} u^{q^{n+i}}}{\theta^{q^{n+i}} - t} - \sum_{n=0}^\infty \frac{\alpha_n u^{q^n}(t-\theta)}{\theta^{q^n} -t} \\
  &= \sum_{n=0}^\infty \left(\sum_{i=1}^n A_i \alpha_{n-i}^{q^i} \right) \frac{u^{q^n}}{\theta^{q^n}-t}- \sum_{n=0}^\infty \frac{\alpha_n u^{q^n}(t-\theta)}{\theta^{q^n} -t} \\
  &= \sum_{n=0}^\infty \frac{\alpha_n u^{q^n}(\theta^{q^n}-\theta)}{\theta^{q^n}-t}- \sum_{n=0}^\infty \frac{\alpha_n u^{q^n}(t-\theta)}{\theta^{q^n} -t} \\
  &= \sum_{n=0}^\infty \frac{\alpha_n u^{q^n}(\theta^{q^n}-t)}{\theta^{q^n}-t} \\
  &= \xi.
\end{align*}
Now since $\exp_\phi(\omega_j) = 0$ for each $j$, it follows that $f_\phi(\omega_j; t) \in \Sol(\Delta_\phi)$.  From Proposition~\ref{AGFproperties}, we know $\Res_{t=\theta} f_\phi(\omega_j; t) = -\omega_j$.  Thus, any $\FF_q(t)$-linear dependence among $f_\phi(\omega_1; t), \dots, f_\phi(\omega_r; t)$ would induce an $\FF_q(\theta)$-linear dependence of $\omega_1, \dots, \omega_r$, violating their linear independence.  This proves (b).
\end{proof}

\begin{remark}
With a little more work it can be shown in the above proposition that $\{ f_\phi(\omega_1; t), \dots, f_\phi(\omega_r; t) \}$ is also an $\FF_q[t]$-basis for $\Sol(\Delta_\phi) \cap \TT$.  The essence of the argument is to use \cite[Prop.~3.4.7(b)]{P08} in conjunction with the rigid analytic trivialization $\Psi_\phi$ from \cite[Eq.~(3.4.6)]{CP12}.  We omit the details.
\end{remark}

The connection between Anderson generating functions and shadowed partitions has its basis in the following construction.  For $n \in \ZZ$, define the rational function in $t$,
\begin{equation} \label{Bdef}
\cB_n(t):= \sum_{\bS \in P_r(n)}\prod_{i=1}^r \prod_{j \in S_i}\frac{A_i^{q^j}}{t-\theta^{q^{i+j}}}.
\end{equation}
Note that for $n>0$, $\cB_{-n}(t) = 0$, and $\cB_0(t) = 1$.  Moreover, for $n \geq 0$,
a straightforward calculation together with Theorem~\ref{alphabeta} yields
\begin{equation} \label{Bnbetan}
  \cB_n(\theta) = \sum_{\bS \in P_r(n)} \frac{\bA^{\bS}}{L(\bS)} = \beta_n.
\end{equation}
It is thus natural to consider the series
\begin{equation}\label{Ldef}
 \cL_\phi(\xi;t) := \sum_{n=0}^\infty \cB_n(t) \xi^{q^n},
\end{equation}
as a deformation of $\log_\phi(\xi)$.

\begin{lemma}\label{DegSummands}
Let $\phi$ be a Drinfeld module of rank $r$ given by \eqref{phi}. Set
\[
N(\phi) := \{1\leq i\leq r: A_i\neq 0\},
\]
and
\[
P_r(n;\phi)=\{\bS=(S_1,S_2, \dots, S_r)\in P_r(n): S_i=\emptyset \textnormal{\ for all\ } i\notin N(\phi)\}.
\]
For $\bS \in P_r(n)$, set
\[
X_\phi(\bS;t):=\prod_{i=1}^r \prod_{j \in S_i}\frac{A_i^{q^j}}{t-\theta^{q^{i+j}}}.
\]
\begin{enumerate}
\item[(a)] We have $X_\phi(\bS;t)=0$ if and only if $\bS \notin P_r(n;\phi)$, whence
\[
\cB_n(t)=\sum_{\bS \in P_r(n;\phi)}X_\phi(\bS;t).
\]
\item[(b)]
Define rational numbers $\mu_{ik}$ for $i$, $k \in N(\phi)$ by
\[
\mu_{ik}:=\frac{\deg(A_k)-q^k}{q^k-1}-\frac{\deg(A_i)-q^i}{q^i-1}.
\]
Then for all $\bS \in P_r(n;\phi)$ and any $i\in N(\phi)$ we have
\[
\log_q\lVert X_\phi(\bS;t)\rVert=\frac{q^n-1}{q^i-1}(\deg(A_i)-q^i)+\sum_{k\in N(\phi)}(q^k-1)w(S_k)\mu_{ik},
\]
and also, for all $z\in \CC_\infty$ with $|z|< |\theta|^q$ we have
\[
\deg (X_\phi(\bS;z))=\frac{q^n-1}{q^i-1}(\deg(A_i)-q^i)+\sum_{k\in N(\phi)}(q^k-1)w(S_k)\mu_{ik}.
\]
\end{enumerate}
\end{lemma}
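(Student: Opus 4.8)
The plan is to compute the norm (and the degree of the specialization at $z$) of a single product $X_\phi(\bS;t)$ by a direct term-by-term estimate, and then to rewrite the resulting exponent using the defining relation for shadowed partitions in Lemma~\ref{facts1}(e). Part~(a) is the easy bookkeeping step: since $A_i = 0$ exactly when $i \notin N(\phi)$, the inner product $\prod_{j \in S_i} A_i^{q^j}/(t - \theta^{q^{i+j}})$ vanishes precisely when some $S_i$ with $i \notin N(\phi)$ is nonempty, and is a well-defined nonzero rational function otherwise; summing over $P_r(n)$ and discarding the zero terms gives $\cB_n(t) = \sum_{\bS \in P_r(n;\phi)} X_\phi(\bS;t)$. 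For part~(b), first I would observe that each factor $t - \theta^{q^{i+j}}$ has $\lVert t - \theta^{q^{i+j}} \rVert = |\theta|^{q^{i+j}}$ (the leading term in $\theta$ dominates since $i+j \geq 1$), and likewise $\deg(z - \theta^{q^{i+j}}) = q^{i+j}$ for any $z$ with $|z| < |\theta|^q$, because $q^{i+j} \geq q$ forces $|\theta^{q^{i+j}}| > |z|$. Hence, writing $a_k := \deg(A_k)$,
\[
  \log_q \lVert X_\phi(\bS;t) \rVert
  = \sum_{k \in N(\phi)} \sum_{j \in S_k} \bigl( q^j a_k - q^{k+j} \bigr)
  = \sum_{k \in N(\phi)} (a_k - q^k) \, w(S_k),
\]
and the same identity holds verbatim for $\deg X_\phi(\bS;z)$ by the degree computation above. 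So both quantities equal $\sum_{k \in N(\phi)} (a_k - q^k) w(S_k)$, and it remains to show this equals the asserted expression for each fixed $i \in N(\phi)$.

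The key algebraic step is then to massage $\sum_{k} (a_k - q^k) w(S_k)$ into the stated form. Fix $i \in N(\phi)$. Write $a_k - q^k = (q^k - 1)\,\dfrac{a_k - q^k}{q^k - 1}$ and introduce the abbreviation $c_k := \dfrac{a_k - q^k}{q^k - 1}$, so that $\mu_{ik} = c_k - c_i$ and $a_k - q^k = (q^k-1) c_k = (q^k - 1)(c_i + \mu_{ik})$. Substituting,
\[
  \sum_{k \in N(\phi)} (a_k - q^k) w(S_k)
  = c_i \sum_{k \in N(\phi)} (q^k - 1) w(S_k) \;+\; \sum_{k \in N(\phi)} (q^k - 1) w(S_k)\, \mu_{ik}.
\]
Now apply Lemma~\ref{facts1}(e): since $\bS \in P_r(n)$ and $S_k = \emptyset$ for $k \notin N(\phi)$ (with $w(\emptyset) = 0$), we have $\sum_{k \in N(\phi)} (q^k - 1) w(S_k) = \sum_{k=1}^r (q^k - 1) w(S_k) = q^n - 1$. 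Therefore the first term becomes $c_i (q^n - 1) = \dfrac{q^n - 1}{q^i - 1}(a_i - q^i)$, which is exactly the claimed main term, and the second term is precisely $\sum_{k \in N(\phi)} (q^k - 1) w(S_k)\mu_{ik}$. This completes part~(b).

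The only real subtlety — and the place to be careful rather than truly an obstacle — is the passage $|z| < |\theta|^q \Rightarrow \deg(z - \theta^{q^{i+j}}) = q^{i+j}$: one needs $i + j \geq 1$ for every factor that actually appears, which holds because $i \geq 1$ and $j \geq 0$, so $q^{i+j} \geq q$ and the ultrametric inequality is strict. (Note that the case $n = 0$, where every $S_k$ is empty, gives the empty product $X_\phi(\bS;t) = 1$ and both sides of the formula equal $0$, consistent with $q^0 - 1 = 0$.) Everything else is the routine norm computation and the substitution above; no convergence issues arise here since each $X_\phi(\bS;t)$ is a single rational function, the convergence of the full series $\cL_\phi(\xi;t)$ being addressed separately in the main theorem.
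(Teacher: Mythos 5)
Your proof is correct and follows essentially the same route as the paper's: both reduce to the identity $\log_q\lVert X_\phi(\bS;t)\rVert=\sum_{k\in N(\phi)}w(S_k)(\deg(A_k)-q^k)$ (you via multiplicativity of $\lVert\cdot\rVert$ factor by factor, the paper via a single geometric-series expansion whose $m=0$ coefficient dominates) and then convert this to the stated form using Lemma~\ref{facts1}(e). The only difference is that you write out explicitly the ``straightforward computation'' with $c_k$ and $\mu_{ik}$ that the paper leaves to the reader, and your algebra there checks out.
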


\begin{proof}
Part (a) is immediate from the definitions of $\cB_n(t)$ and $X_\phi(\bS;t)$. To prove (b) we apply the geometric series formula to $X_\phi(\bS;t)$ for $\bS\in P_r(n;\phi)$ to obtain
\[
X_\phi(\bS;t)=-\sum_{m=0}^\infty \left(\prod_{i\in N(\phi)} \prod_{j \in S_i}\frac{A_i^{q^j}}{\theta^{(m+1)q^{i+j}}}\right) t^m=-\sum_{m=0}^\infty \left(\prod_{i\in N(\phi)} \left(\frac{A_i}{\theta^{(m+1)q^i}}\right)^{w(S_i)}\right) t^m,
\]
from which it is clear that the maximal $\lvert\, \cdot\,\rvert$ value for a $t^m$ coefficient is achieved at $m=0$. We thus have
\[
\log_q\lVert X_\phi(\bS;t)\rVert=\sum_{i\in N(\phi)} w(S_i)\big(\deg(A_i)-q^i\big),
\]
and a straightforward computation using  Lemma~\ref{facts1}(e) delivers the first formula in (b). To prove the second, note that since $i\geq 1$ and $\deg z<q$, we have $\deg(z-\theta^{q^{i+j}})=q^{i+j}$ for any $j\in S_i$. We thus find
\[
\deg(X_\phi(\bS;z))=\sum_{i\in N(\phi)} w(S_i)\big(\deg(A_i)-q^i\big),
\]
and the formula follows exactly as above.
\end{proof}

To understand the behavior of $\lVert\cB_n(t)\rVert$ and $\deg(\cB_n(z))$, we need to identify the maximal values for $\lVert X_\phi(\bS;t)\rVert$ and $\deg(X_\phi(\bS;z))$ across all $\bS\in P_r(n;\phi)$. For the remainder of the paper we will let $s=s(\phi)$ denote the (smallest, if there is more than one) index $s\in N(\phi)$ such that
\begin{equation}\label{sdef}
\frac{\deg(A_s)-q^s}{q^s-1}\geq \frac{\deg(A_i)-q^i}{q^i-1}, \quad\textrm{ for all } i\in N(\phi).
\end{equation}

\begin{corollary}\label{upperbound}
Let $\phi$ be a Drinfeld module as in \eqref{phi}, and let $s=s(\phi)$ be as in \eqref{sdef}. Then  for all $n\geq 0$,
\[
\log_q\lVert \cB_n(t)\rVert \leq \frac{q^n-1}{q^s-1} (\deg(A_s)-q^s).
\]
Also, for $z\in \CC_\infty$ with $|z|< |\theta|^q$ we have
\[
\deg( \cB_n(z)) \leq \frac{q^n-1}{q^s-1} (\deg(A_s)-q^s).
\]
\end{corollary}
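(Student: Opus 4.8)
The plan is to leverage Lemma~\ref{DegSummands}(b), which gives an exact formula for $\log_q\lVert X_\phi(\bS;t)\rVert$ (and for $\deg(X_\phi(\bS;z))$ when $|z|<|\theta|^q$), valid for \emph{any} fixed choice of index $i\in N(\phi)$. Since by Lemma~\ref{DegSummands}(a) we have $\cB_n(t)=\sum_{\bS\in P_r(n;\phi)}X_\phi(\bS;t)$ and the norm $\lVert\cdot\rVert$ on $\TT$ is non-archimedean, it suffices to bound $\log_q\lVert X_\phi(\bS;t)\rVert$ uniformly over $\bS\in P_r(n;\phi)$ by the right-hand side; the same reasoning applies to $\deg(\cB_n(z))$ since $\deg$ on $\CC_\infty$ is likewise non-archimedean (ultrametric). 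So the entire problem reduces to a single maximization over shadowed partitions.

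First I would specialize the formula in Lemma~\ref{DegSummands}(b) to the choice $i=s=s(\phi)$, obtaining
\[
\log_q\lVert X_\phi(\bS;t)\rVert=\frac{q^n-1}{q^s-1}(\deg(A_s)-q^s)+\sum_{k\in N(\phi)}(q^k-1)w(S_k)\,\mu_{sk}.
\]
The first term is exactly the claimed bound, so it remains to show the sum $\sum_{k\in N(\phi)}(q^k-1)w(S_k)\mu_{sk}$ is $\leq 0$ for every $\bS\in P_r(n;\phi)$. By the defining inequality \eqref{sdef} for $s$, we have $\mu_{sk}=\frac{\deg(A_k)-q^k}{q^k-1}-\frac{\deg(A_s)-q^s}{q^s-1}\leq 0$ for all $k\in N(\phi)$. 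Meanwhile each factor $(q^k-1)$ is positive (as $k\geq 1$) and each weight $w(S_k)=\sum_{j\in S_k}q^j\geq 0$ is nonnegative. Hence every summand is $\leq 0$, so the whole sum is $\leq 0$, giving the desired inequality on $\lVert\cB_n(t)\rVert$. The argument for $\deg(\cB_n(z))$ when $|z|<|\theta|^q$ is verbatim the same, using the second formula in Lemma~\ref{DegSummands}(b).

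The main obstacle here is essentially nonexistent once Lemma~\ref{DegSummands}(b) is in hand — the whole corollary is a direct consequence of the sign of $\mu_{sk}$ built into the definition of $s(\phi)$ together with the ultrametric inequality. The only point requiring a word of care is the edge case $n=0$ (where $P_r(0;\phi)=\{(\emptyset,\dots,\emptyset)\}$, $\cB_0(t)=1$, and both sides of the claimed inequality are $0$), and the observation that $P_r(n;\phi)$ could a priori be empty for some $n$, in which case $\cB_n(t)=0$ and the inequality holds trivially (with the convention $\log_q\lVert 0\rVert=-\infty$). I would state these briefly and then conclude.
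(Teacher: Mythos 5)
Your proposal is correct and follows exactly the paper's argument: specialize Lemma~\ref{DegSummands}(b) at $i=s$, note that the definition \eqref{sdef} of $s(\phi)$ forces $\mu_{sk}\leq 0$ so the correction sum is nonpositive, and conclude via the ultrametric inequality applied to the decomposition of $\cB_n(t)$. Your explicit treatment of the edge cases ($n=0$ and the possibility that a summand vanishes) is a harmless elaboration of what the paper leaves implicit.
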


\begin{proof}
Our choice of $s$ implies that $\mu_{sk}\leq 0$ for all $k \in N(\phi)$.  Both formulas follow at once from Lemma~\ref{DegSummands}(b), combined with \eqref{Bdef} and the ultrametric inequality.
\end{proof}

It could happen that the maximal value of $\lVert X_\phi({\bf S};t)\rVert$ is achieved for two distinct shadowed partitions $\bS_1\neq \bS_2$; in which case $\lVert \cB_n(t)\rVert$ might be, for some values of $n$, smaller than the bound in Corollary~\ref{upperbound}. Nonetheless, the corollary enables us to discuss the convergence of $\cL_\phi(\xi;t)$ in $\TT$ for $|\xi|$ sufficiently small, as we discuss in the next proposition.

\begin{proposition}\label{rocprop}
Let $\phi$ be a Drinfeld module as in \eqref{phi}, and set
\[
R_\phi := |\theta|^{(q^s-\deg(A_s))/(q^s-1)},
\]
where $s=s(\phi)$ is chosen as in \eqref{sdef}.
\begin{enumerate}
\item[(a)]
If $\xi\in \CC_\infty$ is such that $|{\xi}|<R_\phi$, then the series \eqref{Ldef} defining $\cL_\phi(\xi;t)$ converges in $\TT$ with respect to the $\lVert\, \cdot\, \rVert$-norm.  In this case, then also for all $z\in \CC_\infty$ with $|z| < |\theta|^q$, the series $\cL_\phi(\xi;z)$ converges in $\CC_\infty$.
\medskip
\item[(b)]
If we have that the inequality in \eqref{sdef} is strict when $i \neq s$, then the series definining $\cL_\phi(\xi;t)$ does not converge in $\TT$ with respect to the $\lVert\,\cdot\, \rVert$-norm for any $|\xi|\geq R_\phi$.  In this case, then also for any fixed $z \in \CC_\infty$ with $|z| < |\theta|^q$,  $R_\phi$ is the exact radius of convergence in $\xi$ of $\cL_\phi(\xi;z)$.
\end{enumerate}
\end{proposition}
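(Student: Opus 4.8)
The plan is to control $\|\cL_\phi(\xi;t)\|$ and $\deg(\cL_\phi(\xi;z))$ term by term, using the sharp degree computations for the summands $X_\phi(\bS;t)$ from Lemma~\ref{DegSummands}(b). For part (a), we already have the uniform upper bound from Corollary~\ref{upperbound}: for every $n\geq 0$,
\[
  \log_q \lVert \cB_n(t)\xi^{q^n}\rVert \leq \frac{q^n-1}{q^s-1}(\deg(A_s)-q^s) + q^n \log_q|\xi|.
\]
Writing $\log_q|\xi| = -\log_q R_\phi - \varepsilon$ with $\varepsilon>0$ (since $|\xi|<R_\phi$), and noting $\log_q R_\phi = (q^s-\deg(A_s))/(q^s-1)$, the right-hand side becomes
\[
  \frac{q^n-1}{q^s-1}(\deg(A_s)-q^s) - q^n\Bigl(\tfrac{\deg(A_s)-q^s}{q^s-1}\Bigr)\cdot(-1)\;-\;q^n\log_q R_\phi \;-\;q^n\varepsilon,
\]
wait — more cleanly: the first two $n$-dependent pieces combine to a bounded constant (the $q^n$ terms cancel, leaving $-\tfrac{1}{q^s-1}(\deg(A_s)-q^s)$), and what survives is $-q^n\varepsilon + O(1) \to -\infty$. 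Hence $\|\cB_n(t)\xi^{q^n}\|\to 0$, so the series converges in $\TT$. The same estimate with $\deg(X_\phi(\bS;z))$ in place of $\log_q\|X_\phi(\bS;t)\|$ — valid for $|z|<|\theta|^q$ by the second half of Lemma~\ref{DegSummands}(b) and Corollary~\ref{upperbound} — gives $\deg(\cB_n(z)\xi^{q^n})\to-\infty$, so $\cL_\phi(\xi;z)$ converges in $\CC_\infty$. This part is routine; the hypothesis $|z|<|\theta|^q$ is exactly what makes $\deg(z-\theta^{q^{i+j}})=q^{i+j}$ for all relevant $i,j$.

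For part (b), the upper bound must be shown to be attained, so the terms cannot go to zero when $|\xi|\geq R_\phi$. The key observation is that under the strict-inequality hypothesis, $s$ is the \emph{unique} index realizing the maximum in \eqref{sdef}, so $\mu_{sk}<0$ for every $k\in N(\phi)$ with $k\neq s$, while $\mu_{ss}=0$. Consider the shadowed partition $\bS^{(n)}\in P_r(n;\phi)$ with all of $\{0,1,\dots,n-1\}$ "packed into'' $S_s$ via the $r$-step structure — concretely, I would use Lemma~\ref{facts1}(e): one needs $\sum_k(q^k-1)w(S_k)=q^n-1$, and taking $S_s$ appropriately (and $S_k=\emptyset$ for $k\neq s$) forces $(q^s-1)w(S_s)=q^n-1$, which is solvable in nonnegative integers precisely when $s\mid n$. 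For such $n$ (an arithmetic progression, hence infinitely many), Lemma~\ref{DegSummands}(b) with $i=s$ gives $\log_q\|X_\phi(\bS^{(n)};t)\| = \tfrac{q^n-1}{q^s-1}(\deg(A_s)-q^s)$ exactly, and all \emph{other} $\bS\in P_r(n;\phi)$ have strictly smaller norm because their formula picks up a strictly negative contribution $\sum_{k\neq s}(q^k-1)w(S_k)\mu_{sk}<0$ (at least one $w(S_k)>0$ since $\bS\neq\bS^{(n)}$ and $S_s$ alone cannot absorb the full weight unless $\bS=\bS^{(n)}$). By the ultrametric equality in the strict case, $\log_q\|\cB_n(t)\| = \tfrac{q^n-1}{q^s-1}(\deg(A_s)-q^s)$ for these $n$. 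Therefore along this subsequence
\[
  \log_q\lVert \cB_n(t)\xi^{q^n}\rVert = \frac{q^n-1}{q^s-1}(\deg(A_s)-q^s) + q^n\log_q|\xi|,
\]
and when $|\xi|\geq R_\phi$ this is $\geq -\tfrac{1}{q^s-1}(\deg(A_s)-q^s)$, a fixed constant, so the terms do not tend to $0$ and the series diverges in $\TT$. The identical argument with degrees (valid for $|z|<|\theta|^q$) shows $\deg(\cB_n(z)\xi^{q^n})$ stays bounded below along the subsequence when $|\xi|\geq R_\phi$, while part (a) already gives convergence for $|\xi|<R_\phi$; hence $R_\phi$ is exactly the radius of convergence in $\xi$ of $\cL_\phi(\xi;z)$.

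The main obstacle is the claim that the maximal summand norm in $\cB_n(t)$ is \emph{attained} (not merely bounded) and attained \emph{uniquely enough} that no ultrametric cancellation occurs. This requires (i) exhibiting, for infinitely many $n$, an explicit $\bS^{(n)}\in P_r(n;\phi)$ achieving the bound — which via Lemma~\ref{facts1}(e) amounts to the divisibility condition $s\mid n$ and constructing the corresponding subset of $\{0,\dots,n-1\}$ of weight $(q^n-1)/(q^s-1)$ (i.e. the base-$q$ "repunit'' pattern $\{0,s,2s,\dots,n-s\}$), and (ii) verifying that every competing $\bS$ with the same $n$ has strictly smaller norm, which is precisely where the strict inequality in \eqref{sdef} — equivalently $\mu_{sk}<0$ for $k\neq s$ — is indispensable, since it converts "$\leq$'' in Corollary~\ref{upperbound} into "$<$'' for all $\bS\neq\bS^{(n)}$, and thus "$=$'' for $\|\cB_n(t)\|$ by the non-archimedean triangle equality. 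Once this is in hand both halves of (b) follow immediately, and the $\CC_\infty$-statements are obtained by substituting the degree formula for the norm formula throughout, which is legitimate because Lemma~\ref{DegSummands}(b) provides the two in parallel under $|z|<|\theta|^q$.
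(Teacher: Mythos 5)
Your proposal is correct and follows essentially the same route as the paper: part (a) is the direct consequence of Corollary~\ref{upperbound} (the $\varepsilon$-bookkeeping you carry out is exactly what ``follows directly'' means there), and part (b) is the paper's argument verbatim in substance --- restrict to $n$ with $s\mid n$, use Lemma~\ref{facts1}(e) to see that $S_s=\{0,s,\dots,n-s\}$ with all other $S_k=\emptyset$ is the unique shadowed partition avoiding a strictly negative $\mu_{sk}$ contribution, conclude $\log_q\lVert\cB_n(t)\rVert=\frac{q^n-1}{q^s-1}(\deg(A_s)-q^s)$ by the ultrametric equality, and observe the terms fail to tend to $0$ when $|\xi|\geq R_\phi$, with the degree formula from Lemma~\ref{DegSummands}(b) running in parallel for the $\CC_\infty$-statement.
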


\begin{proof}
The convergence statements follow directly from Corollary \ref{upperbound}.  When the inequality in \eqref{sdef} is strict, we see from Lemma~\ref{DegSummands}(b) with $i=s$ that
\[
\log_q \lVert X_\phi(\bS;t)\rVert=\frac{q^n-1}{q^s-1}(\deg(A_s)-q^s)+\sum_{k\in N(\phi),\, k\neq s}(q^k-1)w(S_k)\mu_{sk}=\deg(X_\phi(\bS;z)),
\]
with $\mu_{sk}<0$ for all $k\in N(\phi)\setminus \{s\}$. Thus if $s|n$, then the maximal value is uniquely achieved for $\bS$ in which $S_i=\emptyset$ for $i\neq s$ and $S_s=\{0, s, 2s, \dots, n-s\}$. For such $n$ we have
\[
\log_q\lVert \cB_n(t)\xi^{q^n}\rVert=\frac{q^n}{q^s-1}\big(\deg(A_s)-q^s+(q^s-1)\deg(\xi)\big)+\frac{q^s}{q^s-1}=\deg\left(\cB_n(z)\xi^{q^n}\right),
\]
the limit of which is $-\infty$ if and only if $\deg(\xi)<\frac{q^s-\deg(A_s)}{q^s-1}$, and part (b) follows.
\end{proof}

\begin{remark}
Under the conditions of Proposition~\ref{rocprop}(b), it will follow from Theorem~\ref{MainThm}(b) that $R_\phi$ is the radius of convergence in $\xi$ of $\log_\phi(\xi)$.
Even if the inequality in \eqref{sdef} is not strict for some $i \neq s$, we speculate that the conclusion of Proposition~\ref{rocprop}(b) still holds.  Indeed the authors have shown \cite[Lem.~4.1, Cor.~4.2]{EP} that this is the case when the rank of $\phi$ is $2$.  However, in higher ranks there are many middle cases to consider that require more involved combinatorial analysis.
\end{remark}

We also have the following recursive formulas for $\cB_n(t)$.

\begin{lemma}\label{Blemma}
For $m \geq 1$, the sequence $\cB_m(t)$ satisfies the following recurrences:
\begin{enumerate}
\item[(a)] $\displaystyle \cB_m(t)=\sum_{k=1}^r \frac{A_k}{t-\theta^{q^k}} \cdot \cB_{m-k}(t)^{(k)}$,
\item[(b)] $\displaystyle \cB_m(t) =\sum_{k=1}^r \frac{A^{q^{m-k}}_k}{t-\theta^{q^m}}\cdot \cB_{m-k}(t)$.
\end{enumerate}
\end{lemma}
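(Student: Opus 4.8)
The plan is to prove both recurrences directly from the definition \eqref{Bdef} of $\cB_m(t)$ by partitioning the index set $P_r(m;\phi)$ (equivalently $P_r(m)$, since the vanishing summands cause no harm) according to two of the bijections recorded in Lemma~\ref{facts1}. For part~(a), I would use the decomposition $P_r(m) = \bigsqcup_{k=1}^r \Pi_k\bigl(P_r(m-k)\bigr)$ from Lemma~\ref{facts1}(b), where $\Pi_k$ sends $(S_1,\dots,S_r)$ to the tuple obtained by shifting every coordinate up by $k$ and adjoining $0$ to the $k$-th coordinate. For part~(b), I would instead use the decomposition $P_r(m) = \bigsqcup_{k=1}^r \Psi_k\bigl(P_r(m-k)\bigr)$ from Lemma~\ref{facts1}(d), where $\Psi_k$ adjoins the largest available index $m-k$ to the $k$-th coordinate and leaves everything else unchanged.

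First, for (a): fix $k$ and a shadowed partition $\bT = (T_1,\dots,T_r) \in P_r(m-k)$, and write $\bS = \Pi_k(\bT)$, so $S_i = T_i + k$ for $i \neq k$ and $S_k = \{0\} \cup (T_k+k)$. I would then compute the factor $X_\phi(\bS;t) = \prod_i \prod_{j\in S_i} A_i^{q^j}/(t-\theta^{q^{i+j}})$ attached to $\bS$. The new element $0 \in S_k$ contributes exactly $A_k/(t-\theta^{q^k})$. Each remaining element $j \in S_i$ has the form $j = j'+k$ with $j' \in T_i$, so its factor is $A_i^{q^{j'+k}}/(t-\theta^{q^{i+j'+k}})$; collecting all of these, the product over the shifted indices equals the $k$-th Frobenius twist of $X_\phi(\bT;t)$, namely $\bigl(\prod_i\prod_{j'\in T_i} A_i^{q^{j'}}/(t-\theta^{q^{i+j'}})\bigr)^{(k)}$, using that $(t - \theta^{q^{i+j'}})^{(k)} = t - \theta^{q^{i+j'+k}}$ since $t$ is twist-invariant. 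Summing over all $\bT \in P_r(m-k)$ and then over $k$ gives exactly the right-hand side of (a). For (b), the argument is parallel but uses $\Psi_k$: given $\bT \in P_r(m-k)$, set $\bS = \Psi_k(\bT)$, which adjoins $m-k$ to $T_k$. The factors indexed by the old elements of $\bT$ are unchanged, reproducing $X_\phi(\bT;t)$ verbatim; the single new index $m-k \in S_k$ contributes $A_k^{q^{m-k}}/(t-\theta^{q^{k+(m-k)}}) = A_k^{q^{m-k}}/(t-\theta^{q^m})$. Since this last factor does not depend on $\bT$, it pulls out of the sum over $P_r(m-k)$, yielding the right-hand side of (b).

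The main things to check carefully are that these are genuine partitions with no overlap and no omissions --- but this is exactly the content of Lemma~\ref{facts1}(b),(d), so I may invoke it directly --- and the bookkeeping that the exponents and twists line up. The one point that deserves a sentence of justification is the interplay between the zero summands of $\cB_m$ and the bijections: the maps $\Pi_k$ and $\Psi_k$ are defined on all of $P_r(m-k)$, so summing $X_\phi$ over all of $P_r(m)$ is harmless because the summands corresponding to $\bS \notin P_r(m;\phi)$ vanish (Lemma~\ref{DegSummands}(a)), and these correspond precisely under the bijections either to $\bT \notin P_r(m-k;\phi)$ or to $k \notin N(\phi)$ (so $A_k = 0$ kills the new factor). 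Thus the recurrences hold as stated over all of $\CC_\infty(t)$.

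I expect the only real obstacle to be notational: keeping the index-shifting in $\Pi_k$ straight while matching it against the definition of the Frobenius twist $f \mapsto f^{(k)}$ on $\CC_\infty(t)$. Concretely, one must be careful that twisting raises the $\theta$-powers in the denominators (via $\theta \mapsto \theta^q$) but leaves $t$ fixed, so that $1/(t-\theta^{q^{a}})$ twists to $1/(t-\theta^{q^{a+k}})$ --- precisely the shift $\Pi_k$ induces on the subscript $i+j$. Once this correspondence is set up cleanly the two computations are essentially immediate, and no convergence or analytic input is needed since everything takes place in the field of rational functions $\CC_\infty(t)$.
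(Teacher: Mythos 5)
Your proposal is correct and matches the paper's own proof essentially step for step: part (a) uses the bijections $\Pi_k$ and the partition of $P_r(m)$ into the sets $P_r^k(m)$ from Lemma~\ref{facts1}(b), and part (b) uses the injections $\Psi_k$ and Lemma~\ref{facts1}(d), with the same bookkeeping of twists and the constant factor $A_k^{q^{m-k}}/(t-\theta^{q^m})$ pulling out of the sum. The extra discussion of $P_r(m;\phi)$ versus $P_r(m)$ is harmless but unnecessary, since the definition of $\cB_m(t)$ already sums over all of $P_r(m)$ and the identities are formal in $\CC_\infty(t)$.
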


\begin{proof}
{From} the definition of $\cB_n(t)$ we see that
\begin{align*}
\frac{A_k}{t-\theta^{q^k}} \cdot \cB_{m-k}(t)^{(k)}
&=\frac{A_k}{t-\theta^{q^k}} \sum_{\bS \in P_r(m-k)}\prod_{i=1}^r \prod_{j\in S_i} \frac{A_i^{q^{j+k}}}{(t-\theta^{q^{i+j+k}})}\\
&=\sum_{\bS \in P^k_r(m)}\prod_{i=1}^r \prod_{j\in S_i} \frac{A_i^{q^j}}{(t-\theta^{q^{i+j}})},
\end{align*}
where we have used the fact that the map
\begin{gather*}
\Pi_k : P_r(m-k)\to P^k_r(m)\\
(S_1,\dots, S_k, \dots, S_r)\mapsto (S_1+k, \dots, (S_k+k)\cup\{0\}, \dots, S_r+k)
\end{gather*}
is a bijection from Lemma~\ref{facts1}(b). Part (a) follows since the sets $P_r^k(m)$, $1 \leq k \leq r$, form a partition of $P_r(m)$.
To prove (b) note that
\[
 \frac{A_k^{q^{m-k}}}{t-\theta^{q^m}}\sum_{{\bf S}\in P_r(m-k)}\prod_{i=1}^r \prod_{j\in S_i} \frac{A_i^{q^j}}{t-\theta^{q^{i+j}}}= \sum_{{\bf S} \in \Psi_k(P_r(m-k))}\prod_{i=1}^r \prod_{j\in S_i} \frac{A_i^{q^j}}{t-\theta^{q^{i+j}}}.
\]
Since the sets $\Psi_k(P_r(m-k))$, $1 \leq k \leq r$, form a partition of $P_r(m)$ by Lemma~\ref{facts1}(d), we are done.
\end{proof}

We now state and prove the main theorem (Theorem~\ref{MainThm}) of the present paper. 
The theorem shows that $\cL_\phi(\xi;t)$ is a deformation of $\log_\phi(\xi)$ in the $t$ parameter and that it plays the role of the function $\cL_C(\xi;t)$ from \eqref{LCdef}.  We also see that $\cL_\phi(\xi;t)$ is explicitly linked to the Anderson generating function $f_\phi(u;t)$, just as we have for the case of the Carlitz module in \S\ref{CarlitzSec}.

\begin{theorem} \label{MainThm}
Let $\phi : \FF_q[t] \to \CC[\tau]$ be a Drinfeld module of rank $r$ defined by
\[
\phi_t = \theta + A_1\tau + \dots + A_r\tau^r.
\]
For $u \in \CC_\infty$ and $\xi = \exp_\phi(u)$, let
\[
  \cL_\phi(\xi;t) = \sum_{n=0}^\infty \cB_n(t) \xi^{q^n} = \xi
  + \sum_{n=1}^\infty \left( \sum_{\bS \in P_r(n)} \prod_{i=1}^r \prod_{j\in S_i}
  \frac{A_i^{q^j}}{t-\theta^{q^{i+j}}} \right) \xi^{q^n}.
\]
Suppose that $|\xi| < R_\phi$, where $R_\phi$ is defined in Proposition~\ref{rocprop}.
Then $\cL_\phi(\xi;t)$ satisfies the following properties.
\begin{enumerate}
\item[(a)] \textup{(Convergence):} The defining series for $\cL_\phi(\xi; t)$ converges in $\TT$, and for $z \in \CC_\infty$ with $|z| < |\theta|^q$, $\cL_\phi(\xi;z)$ converges in $\CC_\infty$.
\item[(b)] \textup{(Specialization):} We have
\[
  \cL_\phi(\xi;\theta) = \log_\phi(\xi) = u.
\]
\item[(c)] \textup{($\tau$-Difference relation):} Let $\Delta_\phi = A_r \tau^r + \dots + A_1\tau - (t-\theta)$.  Then
\[
  \Delta_\phi\left( -\frac{\cL_\phi(\xi;t)}{t-\theta} \right) = \xi.
\]
\item[(d)] \textup{(Link with Anderson generating functions):} As elements of $\TT$,
\[
  \cL_\phi(\xi;t) = -(t-\theta) f_\phi(u;t).
\]
\item[(e)] \textup{(Compatibility with $\phi$):} If $|A_i \xi^{q^i}| < R_\phi$ for $0 \leq i \leq r$, then
\[
  \cL_\phi \bigl( \phi_{t}(\xi); t) = t \cL_\phi(\xi; t) - (t-\theta) \xi.
\]
\end{enumerate}
\end{theorem}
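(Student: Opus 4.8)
The plan is to establish the five parts roughly in the order stated, leveraging the combinatorial identities for $\cB_n(t)$ from Lemma~\ref{Blemma} and the convergence estimates from Corollary~\ref{upperbound} and Proposition~\ref{rocprop}.

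\textbf{Parts (a) and (b).} Part (a) is immediate: it is precisely the content of Proposition~\ref{rocprop}(a), since $|\xi| < R_\phi$ by hypothesis. For part (b), I would specialize the defining series at $t = \theta$ term by term, which is legitimate because $\cB_n(t)$ is regular at $t = \theta$ (all the poles $\theta^{q^{i+j}}$ with $i+j \geq 1$ avoid $t = \theta$) and because the convergence in $\TT$ controls the tail uniformly. Then invoke \eqref{Bnbetan}, which says $\cB_n(\theta) = \beta_n$, so that $\cL_\phi(\xi;\theta) = \sum_n \beta_n \xi^{q^n} = \log_\phi(\xi)$. The equality $\log_\phi(\xi) = u$ holds since $\xi = \exp_\phi(u)$ and $|\xi| < R_\phi$ lies within the radius of convergence of $\log_\phi$ (this last point about the radius of convergence being exactly $R_\phi$ is promised for Theorem~\ref{MainThm}(b) itself, so I would instead just argue that $|u|$ must be the absolute value of the preimage $u$ with $\exp_\phi$ injective on the relevant disk, or cite the standard fact that $\log_\phi$ inverts $\exp_\phi$ on a neighborhood of $0$).

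\textbf{Part (c).} This is the heart of the argument. Writing $g(t) = \cL_\phi(\xi;t)$, I want to show $\Delta_\phi(-g/(t-\theta)) = \xi$, i.e.\ $A_r (-g/(t-\theta))^{(r)} + \dots + A_1 (-g/(t-\theta))^{(1)} - (t-\theta)(-g/(t-\theta)) = \xi$. Since twisting commutes with the substitution $t \mapsto \theta^{q^k}$ only in the coefficients, note $(-g/(t-\theta))^{(k)} = -g^{(k)}/(t-\theta^{q^k})$. So the claim becomes
\[
  -\sum_{k=1}^r \frac{A_k\, g^{(k)}(t)}{t-\theta^{q^k}} + g(t) = \xi.
\]
Now $g(t) = \sum_{n \geq 0} \cB_n(t)\xi^{q^n}$, and $g^{(k)}(t) = \sum_{n \geq 0} \cB_n(t)^{(k)} \xi^{q^{n+k}}$ because twisting acts on both the rational function $\cB_n$ and on the constant $\xi^{q^n}$. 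Substituting and reindexing the double sum by $m = n+k$, the coefficient of $\xi^{q^m}$ for $m \geq 1$ on the left-hand side is $\cB_m(t) - \sum_{k=1}^r \frac{A_k}{t-\theta^{q^k}} \cB_{m-k}(t)^{(k)}$, which vanishes exactly by Lemma~\ref{Blemma}(a); and the coefficient of $\xi^{q^0} = \xi$ is $\cB_0(t) = 1$. This yields the identity, provided the rearrangement of the series is justified in $\LL$ (or in $\TT$ after clearing the denominator $t - \theta^{q^k}$, which is a unit in a suitable localization) — here I would use that all series involved converge in $\TT$ and that multiplication by the fixed rational functions $A_k/(t-\theta^{q^k})$ is continuous.

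\textbf{Parts (d) and (e).} For (d), I would follow the Carlitz template from \S\ref{CarlitzSec} verbatim. By Proposition~\ref{AGFphispace}(a), $\Delta_\phi(f_\phi(u;t)) = \xi$, and by part (c), $\Delta_\phi(-\cL_\phi(\xi;t)/(t-\theta)) = \xi$; subtracting, the difference $h(t) := f_\phi(u;t) + \cL_\phi(\xi;t)/(t-\theta)$ lies in $\Sol(\Delta_\phi)$, which by Proposition~\ref{AGFphispace}(b) is spanned over $\FF_q(t)$ by the $f_\phi(\omega_j;t)$. But by Proposition~\ref{AGFproperties}, $f_\phi(u;t)$ has a simple pole at $t = \theta$ with residue $-u$, while $\cL_\phi(\xi;t)/(t-\theta)$ has a simple pole at $t = \theta$ with residue $\cL_\phi(\xi;\theta) = u$ by part (b); hence $h(t)$ is regular at $t = \theta$. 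Since every nonzero element of $\Sol(\Delta_\phi)$ has a pole at $t = \theta$ (its residue recovers a nonzero $\FF_q(\theta)$-combination of the $\omega_j$, which is nonzero by $\FF_q[\theta]$-linear independence of the period lattice basis), we conclude $h(t) = 0$, i.e.\ $\cL_\phi(\xi;t) = -(t-\theta)f_\phi(u;t)$. For (e), the cleanest route is via (d): since $\exp_\phi(\theta u) = \phi_t(\exp_\phi(u)) = \phi_t(\xi)$, we have $\cL_\phi(\phi_t(\xi);t) = -(t-\theta)f_\phi(\theta u; t)$, and from the defining series \eqref{AGFdef} one computes directly that $f_\phi(\theta u;t) = \sum_m \exp_\phi(u/\theta^m) t^m = \exp_\phi(u) + t \sum_m \exp_\phi(u/\theta^{m+1}) t^m = \xi + t f_\phi(u;t)$. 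Hence $\cL_\phi(\phi_t(\xi);t) = -(t-\theta)(\xi + t f_\phi(u;t)) = -(t-\theta)\xi + t\cL_\phi(\xi;t)$, which is the claimed compatibility. The hypothesis $|A_i\xi^{q^i}| < R_\phi$ for $0 \le i \le r$ is exactly what guarantees that $\phi_t(\xi) = \sum_i A_i \xi^{q^i}$ has absolute value $< R_\phi$, so that $\cL_\phi(\phi_t(\xi);t)$ is defined via its own convergent series.

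\textbf{Main obstacle.} The only genuinely delicate point is part (c): carefully justifying the interchange of summation (over $n$ and over the partition sets) and the manipulation of the twisted series inside $\TT$ or $\LL$, and confirming that the bookkeeping with the index shift $m = n+k$ and the boundary term $\cB_0 = 1$ is exactly right. Everything else reduces to the already-established structural facts (Lemmas~\ref{facts1}, \ref{Blemma}, Propositions~\ref{AGFproperties}, \ref{AGFphispace}, \ref{rocprop}) and mirrors the Carlitz computation in \S\ref{CarlitzSec} almost line for line.
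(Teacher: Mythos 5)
Your parts (a)--(d) follow the paper's proof essentially verbatim: (a) is Proposition~\ref{rocprop}, (b) is \eqref{Bnbetan}, (c) is the term-by-term computation reducing to Lemma~\ref{Blemma}(a), and (d) is the $\Sol(\Delta_\phi)$ argument with residue cancellation at $t=\theta$. Where you genuinely diverge is part (e). The paper proves (e) by a direct rearrangement of the defining series for $\cL_\phi(\phi_t(\xi);t)$, reducing to the second recursion, Lemma~\ref{Blemma}(b) (mirroring the Carlitz computation \eqref{LCcompat}); this keeps (e) a purely combinatorial consequence of the shadowed-partition identities, independent of (d). You instead derive (e) from (d) via the clean identity $f_\phi(\theta u;t)=\xi+t\,f_\phi(u;t)$, which is correct and arguably slicker, but it routes a formal series identity through the analytic statement (d) and its reliance on the period basis of $\Sol(\Delta_\phi)$. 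One point you should make explicit: applying (d) to the pair $(\theta u,\phi_t(\xi))$ requires not only $|\phi_t(\xi)|<R_\phi$ but also that $\theta u$ is the \emph{principal} logarithm of $\phi_t(\xi)$, since $f_\phi(\,\cdot\,;t)$ depends on the choice of preimage and the residue cancellation in (d) uses $u'=\log_\phi(\xi')$. This is where the $i=0$ case of your hypothesis earns its keep: $|\theta\xi|<R_\phi$ forces $|\theta u|=q|u|=q|\xi|<R_\phi$, which is below the norm of every nonzero period, so $\theta u=\log_\phi(\phi_t(\xi))$ and (d) applies. With that sentence added, your alternative proof of (e) is complete.
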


\begin{proof}
The convergence of $\cL_\phi(\xi; t)$ in $\TT$ and $\cL_\phi(\xi;z)$ in $\CC_\infty$ follow from Proposition~\ref{rocprop}, thus proving~(a).  Now as $\cB_n(\theta) = \beta_n$ by \eqref{Bnbetan}, taking $t = z = \theta$ it follows that
\begin{equation} \label{Lphispecialize}
  \cL_\phi(\xi; \theta) = \sum_{n=0}^\infty \beta_n \xi^{q^n} = \log_\phi(\xi) = u,
\end{equation}
which proves part~(b).  For~(c) we perform the calculation,
\begin{align*}
  \Delta_\phi \left( - \frac{\cL_\phi(\xi; t)}{t-\theta} \right) &=
  -\sum_{k=1}^r A_k \left( \frac{\cL_\phi(\xi;t)}{t-\theta}\right)^{(k)} + \cL_\phi(\xi;t)\\
  &= -\sum_{k=1}^r \sum_{n=0}^\infty \frac{A_k}{t-\theta^{q^k}} \cdot \cB_n(t)^{(k)}\xi^{q^{n+k}} + \sum_{n=0}^\infty \cB_n(t) \xi^{q^n} \\
  &= \sum_{m=0}^\infty \left( \cB_m(t) - \sum_{k=1}^r \frac{A_k}{t - \theta^{q^k}} \cdot \cB_{m-k}(t)^{(k)}
  \right) \xi^{q^m} \\
  &= \xi,
\end{align*}
where the final equality follows from Lemma~\ref{Blemma}(a).  Now for (d), we see from Proposition~\ref{AGFphispace} that
\[
  \frac{\cL_{\phi}(\xi;t)}{t-\theta} + f_\phi(u;t) \in \Sol(\Delta_\phi).
\]
However, by Proposition~\ref{AGFproperties} and \eqref{Lphispecialize} it follows that this expression is regular at $t=\theta$.  The only element of $\Sol(\Delta_\phi)$ that is regular at $t=\theta$ is $0$ by Propositions~\ref{AGFproperties}(b) and~\ref{AGFphispace}(b), which proves (d).  Finally for (e), we argue as in \eqref{LCcompat}: as elements of $\TT$,
\begin{align*}
  \cL_\phi \left( \theta \xi + \sum_{k=1}^r A_k \xi^{q^k}; t \right) &=
  \theta \xi + \sum_{k=1}^r A_k \xi^{q^k} + \sum_{n=1}^\infty \cB_n(t)
  \left( \theta^{q^n}\xi^{q^n} + \sum_{k=1}^r A_k^{q^n}\xi^{q^{k+n}} \right) \\
  &= \theta\xi + \sum_{m=1}^\infty \left( \theta^{q^m} \cB_m(t)+
  \sum_{k=1}^r A_k^{q^{m-k}} \cB_{m-k}(t) \right) \xi^{q^m}\\
  &= \begin{aligned}[t]
    \theta\xi
    + \sum_{m=1}^\infty{}& \Biggl( t\cB_m(t) \xi^{q^m}
    - (t-\theta^{q^m}) \\
    &{}\times\Biggl( \cB_m(t) -\sum_{k=1}^r \frac{A_k^{q^m}}{t-\theta^{q^m}} \cdot \cB_{m-k}(t)
    \Biggr) \Biggr) \xi^{q^m}
    \end{aligned} \\
  &= t \cL_\phi(\xi;t) -(t-\theta)\xi.
\end{align*}
We note that the reordering of the terms in the second equality is justified by the chosen bounds on $|A_i\xi^{q^i}|$.  Also, the disappearance of the $\sum_{k=1}^r A_k \xi^{q^k}$ term in the second equality occurs when reordering the sum.  The last equality follows from Lemma~\ref{Blemma}(b).
\end{proof}

\section{Applications to quasi-periods} \label{ApplQP}

In the theory of elliptic curves over $\CC$, an extension of an elliptic curve by $\Ga$ is uniformized using the Weierstrass $\zeta$-function, and the coordinates of the periods of the extension are called quasi-periods and are obtained through taking values of $\zeta$ at half-periods.  Anderson, Deligne, Gekeler, and Yu developed a rich analogy in the context of Drinfeld modules (see \cite{Gekeler89a}, \cite{Gekeler89b}, \cite{Yu90}), where one studies isomorphism classes of extensions of a Drinfeld module by $\Ga$.  Subsequently extensions of higher dimensional $t$-modules by $\Ga$ were studied by Brownawell and the second author~\cite{BP02}, together with their quasi-periodic functions and quasi-periods.

For a Drinfeld module $\phi$ of rank $r$ over $\CC_\infty$, one can choose $r-1$ quasi-periodic functions
\[
  F_{\phi,j}(z) := \sum_{n=1}^\infty b_{j,n} z^{q^n}, \quad 1 \leq j\leq r-1,
\]
which are entire and $\FF_q$-linear, such that
\[
  F_{\phi,j}(\theta z) - \theta F_{\phi,j}(z) = \exp_\phi(z)^{q^j}.
\]
In this way the $r$-tuple of functions $(\exp_\phi(z_0), z_0 + F_{\phi,1}(z_1), \dots, z_0 + F_{\phi,r-1}(z_{r-1}))$, where $z_0, \dots, z_{r-1}$ are independent variables, provide the exponential map on a maximal quasi-periodic extension of $\phi$ by $\Ga^{r-1}$.  Moreover, if $\omega \in \La_\phi$ is a period of $\phi$, then the $r$-tuple $(\omega, -F_{\phi,1}(\omega), \dots, {-F_{\phi,r-1}(\omega)})$ is a period of this quasi-periodic extension, and so
\[
  F_{\phi,1}(\omega), \dots,  F_{\phi,r-1}(\omega),
\]
are called the quasi-periods associated to $\omega$.  If $\{\omega_1, \dots, \omega_r\}$ is an $\FF_q[\theta]$-basis of $\Lambda_\phi$, then the $r\times r$ matrix
\[
  \rP = \begin{pmatrix}
  \omega_1 & F_{\phi,1}(\omega_1) & \cdots & F_{\phi,r-1}(\omega_1) \\
   \vdots & \vdots & & \vdots \\
  \omega_r & F_{\phi,1}(\omega_r) & \cdots & F_{\phi,r-1}(\omega_r)
  \end{pmatrix}
\]
forms the period matrix of $\phi$, which provides the isomorphism between the de Rham and Betti modules associated to $\phi$.  The reader is directed to \cite{Gekeler89a}, \cite{Pellarin08}, \cite{Yu90} for more details on these constructions.

As noted by Gekeler~\cite[\S 2]{Gekeler89a}, one can also capture quasi-periods through Anderson generating functions (see also Pellarin~\cite[\S 4]{Pellarin08}).  Moreover, if $\omega \in \La_\phi$ and $f_\phi(\omega; t)$ is the Anderson generating function associated to $\omega$, then for $1 \leq j \leq r-1$,
\begin{equation} \label{QPformula}
  F_{\phi,j}(\omega) = f_{\phi}(\omega,t)^{(j)}\Bigr|_{t=\theta} = \sum_{m = 0}^\infty
  \exp_\phi \left( \frac{\omega}{\theta^{m+1}} \right)^{q^j} \theta^j.
\end{equation}

We can then use Theorem~\ref{MainThm} to find formulas for quasi-periods in terms of $\cL_\phi(\xi;t)$, which require calculating only finitely many $t$-power division points on $\phi$.  Indeed, for $u \in \CC_\infty$,
\[
  \lim_{\ell \to \infty} \exp_\phi\left( \frac{u}{\theta^\ell} \right) = 0,
\]
so we can pick $\ell \geq 0$ so that
\[
  \xi := \exp_\phi \left( \frac{u}{\theta^\ell} \right) \quad \Rightarrow \quad |\xi| < R_\phi.
\]
By \eqref{AGFdef},
\[
  f_\phi(u;t) = t^\ell f_\phi \left( \frac{u}{\theta^\ell}; t \right) + \sum_{m=0}^{\ell-1}
  \exp_\phi \left( \frac{u}{\theta^{m+1}} \right) t^m,
\]
and therefore by Theorem~\ref{MainThm}(d),
\begin{equation}
  f_\phi(u;t) = -\frac{t^{\ell}}{t-\theta} \cdot \cL_\phi ( \xi ; t )
  + \sum_{m=0}^{\ell-1} \exp_\phi \left( \frac{u}{\theta^{m+1}} \right) t^m.
\end{equation}
We then have the following proposition that follows immediately from Proposition~\ref{AGFproperties} and \eqref{QPformula}.

\begin{proposition} \label{PerQP}
For $\omega \in \Lambda_\phi$, pick $\ell \geq 0$ so that $\zeta := \exp_\phi(\omega/\theta^{\ell})$ satisfies $|\zeta| < R_\phi$.  Then
\[
  \omega = \theta^{\ell} \cL_\phi(\zeta;\theta),
\]
and for $1 \leq j \leq r-1$,
\[
  F_{\phi,j}(\omega) = \frac{\theta^{\ell}}{\theta^{q^j} - \theta} \cdot \cL_\phi(\zeta; t)^{(j)}\Bigr|_{t=\theta} + \sum_{m=0}^{\ell-1} \exp_\phi \left( \frac{\omega}{\theta^{m+1}} \right)^{q^j} \theta^m.
\]
\end{proposition}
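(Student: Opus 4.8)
The plan is to derive both formulas as direct consequences of the explicit expression for $f_\phi(u;t)$ obtained just before the proposition statement, combined with Proposition~\ref{AGFproperties} and equation~\eqref{QPformula}. First I would apply $\exp_\phi$ to $\omega/\theta^\ell$ and note that since $\lim_{\ell\to\infty}\exp_\phi(\omega/\theta^\ell)=0$, one can indeed choose $\ell$ with $\lvert\zeta\rvert<R_\phi$, so that Theorem~\ref{MainThm} applies to $\cL_\phi(\zeta;t)$. Since $\omega\in\Lambda_\phi$ means $\exp_\phi(\omega)=0$, the element $u$ of Theorem~\ref{MainThm} associated to $\xi=\zeta$ is $u=\omega/\theta^\ell$, so Theorem~\ref{MainThm}(b) gives $\cL_\phi(\zeta;\theta)=\log_\phi(\zeta)=\omega/\theta^\ell$, and multiplying by $\theta^\ell$ yields the first displayed formula $\omega=\theta^\ell\cL_\phi(\zeta;\theta)$.

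For the quasi-period formula, I would start from the identity
\[
  f_\phi(\omega;t) = -\frac{t^\ell}{t-\theta}\cdot\cL_\phi(\zeta;t) + \sum_{m=0}^{\ell-1}\exp_\phi\!\left(\frac{\omega}{\theta^{m+1}}\right)t^m,
\]
already established in the excerpt, and apply the $j$-th Frobenius twist, which sends each $\exp_\phi(\omega/\theta^{m+1})$ to its $q^j$-th power and each coefficient accordingly. After twisting, I would specialize at $t=\theta$: the twisted rational prefactor $-t^\ell/(t-\theta)$ becomes, upon twisting and setting $t=\theta$, the factor $\theta^\ell/(\theta^{q^j}-\theta)$ (taking care that the twist acts only on the $\CC_\infty$-coefficients, not on the variable $t$, so $-t^\ell/(t-\theta)$ twists to $-t^\ell/(t-\theta^{q^j})$, which at $t=\theta$ gives $-\theta^\ell/(\theta-\theta^{q^j})=\theta^\ell/(\theta^{q^j}-\theta)$). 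The polynomial tail contributes $\sum_{m=0}^{\ell-1}\exp_\phi(\omega/\theta^{m+1})^{q^j}\theta^m$. Then by \eqref{QPformula}, $F_{\phi,j}(\omega)=f_\phi(\omega;t)^{(j)}\big|_{t=\theta}$, so reading off the right-hand side gives exactly the claimed expression with $\cL_\phi(\zeta;t)^{(j)}\big|_{t=\theta}$ in place of $f_\phi(\zeta/\cdots;t)$.

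I expect the only genuine subtlety — rather than a real obstacle — to be bookkeeping with the Frobenius twist: one must remember that $f\mapsto f^{(j)}$ twists only the coefficients $c_i$ in $\sum c_i t^i$ and fixes $t$, so that a rational function such as $t^\ell/(t-\theta)$ twists to $t^\ell/(t-\theta^{q^j})$ (its pole moves), and only afterward does one substitute $t=\theta$; conflating "twist" with "substitute $t=\theta^{q^j}$" would give the wrong denominator. Beyond that, the proof is purely a matter of inserting Theorem~\ref{MainThm}(d) and Proposition~\ref{AGFproperties} into the displayed decomposition of $f_\phi(u;t)$ and reading off coefficients, so the argument is short; the excerpt even flags it as following "immediately," and I would present it in a few lines.
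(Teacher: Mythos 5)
Your proposal is correct and follows essentially the same route the paper intends (the paper's "proof" is just the remark that the result follows immediately from Proposition~\ref{AGFproperties}, \eqref{QPformula}, and the displayed decomposition of $f_\phi(u;t)$): you twist that decomposition, correctly noting that the pole of $-t^\ell/(t-\theta)$ moves to $t=\theta^{q^j}$ so that evaluation at $t=\theta$ is legitimate and produces the factor $\theta^{\ell}/(\theta^{q^j}-\theta)$, and you read off the first formula from Theorem~\ref{MainThm}(b) (equivalently, from the residue at $t=\theta$ via Proposition~\ref{AGFproperties}). No gaps.
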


\begin{example}[Anderson's Legendre Relation]
We conclude by considering the Legendre relation for rank~$2$ Drinfeld modules, which was proved by Anderson (e.g., see \cite[\S 2.5]{CP11}, \cite[\S 4.2]{Pellarin08}, or \cite[\S 2.1]{Pellarin13} for discussions about it).
Let $\phi$ be a rank $2$ Drinfeld module defined by
\[
  \phi_t = \theta + A \tau + B\tau^2,
\]
such that
\[
  j(\phi) := \frac{A^{q+1}}{B} \quad\textup{and}\quad \deg j(\phi) < q^2.
\]
This choice is not essential for the Legendre relation, which holds for arbitrary $\phi$ (even of higher rank), but it simplifies our considerations somewhat.  It follows from \cite[Thm.~5.3]{EP} that each $t$-torsion point $\zeta$ on $\phi$ satisfies $|\zeta| < R_\phi$ (so $\ell=1$).  Therefore, if we pick generators $\omega_1$, $\omega_2$ for $\Lambda_\phi$ and set
\[
  \zeta_i := \exp_\phi \left( \frac{\omega_i}{\theta} \right), \quad i=1, 2,
\]
then we can consider
\[
  \cP(t) := \begin{pmatrix}
  \dfrac{-t}{t-\theta} \cdot \cL_\phi(\zeta_1; t) + \zeta_1 &
  \dfrac{-t}{t-\theta^q} \cdot \cL_\phi(\zeta_1; t)^{(1)} + \zeta_1^q \\[10pt]
  \dfrac{-t}{t-\theta} \cdot \cL_\phi(\zeta_2; t) + \zeta_2 &
  \dfrac{-t}{t-\theta^q} \cdot \cL_\phi(\zeta_2; t)^{(1)} + \zeta_2^q
  \end{pmatrix}.
\]
The entries of $\cP(t)$ are in $\TT$.  By Proposition~\ref{PerQP}, the residues at $t=\theta$ of entries of the first column of $\cP(t)$ are $-\omega_1$ and $-\omega_2$ respectively, and the entries of the second column evaluated at $t=\theta$ are the quasi-periods $\eta_1 := F_{\phi,1}(\omega_1)$ and $\eta_2 := F_{\phi,1}(\omega_2)$.  A relatively straightforward calculation using Theorem~\ref{MainThm}(c) yields
\[
  \det \cP(t)^{(1)} = -\left( \frac{t-\theta}{B} \right) \det \cP(t).
\]
It follows from the discussion in Example~\ref{Ex:omega} that by choosing $(-B)^{1/(q-1)}$ appropriately,
\[
  \det \cP(t) = \frac{\omega_C(t)}{(-B)^{1/(q-1)}}.
\]
Therefore,
\begin{equation}
  \omega_1 \eta_2 - \omega_2\eta_1 = -\Res_{t=\theta} \bigl( \det \cP(t)\bigr) = \frac{\tpi}{(-B)^{1/(q-1)}},
\end{equation}
which provides an analogue of the classical Legendre relations for elliptic curves.  At the expense of more complicated formulas, one can also use the present techniques to investigate the Legendre relation for arbitrary Drinfeld modules (cf.~\cite[\S 3.4]{CP12}, \cite[\S 4.2]{Pellarin08}).
\end{example}

\end{document}